\documentclass[oneside,english]{amsart}
\usepackage[T1]{fontenc}
\usepackage[latin9]{inputenc}
\setlength{\parskip}{\medskipamount}
\setlength{\parindent}{0pt}
\usepackage{babel}
\usepackage{units}
\usepackage{textcomp}
\usepackage{amsthm}
\usepackage{amstext}
\usepackage{amssymb}
\usepackage{setspace}
\usepackage[unicode=true,
 bookmarks=true,bookmarksnumbered=false,bookmarksopen=false,
 breaklinks=false,pdfborder={0 0 1},backref=false,colorlinks=false]
 {hyperref}
\hypersetup{pdftitle={Rank Constrained Homotopies},
 pdfauthor={Kaushika De Silva and Andrew Toms}}
\usepackage{breakurl}

\makeatletter
\numberwithin{equation}{section}
\numberwithin{figure}{section}
\theoremstyle{plain}
\newtheorem{thm}{\protect\theoremname}[section]
  \theoremstyle{definition}
  \newtheorem{defn}[thm]{\protect\definitionname}
  \theoremstyle{remark}
  \newtheorem{rem}[thm]{\protect\remarkname}
  \theoremstyle{plain}
  \newtheorem{cor}[thm]{\protect\corollaryname}
  \theoremstyle{remark}
  \newtheorem*{rem*}{\protect\remarkname}
  \theoremstyle{plain}
  \newtheorem{prop}[thm]{\protect\propositionname}
  \theoremstyle{plain}
  \newtheorem{lem}[thm]{\protect\lemmaname}
  \renewcommand\@biblabel[1]{#1.}

\makeatother

  \providecommand{\corollaryname}{Corollary}
  \providecommand{\definitionname}{Definition}
  \providecommand{\lemmaname}{Lemma}
  \providecommand{\propositionname}{Proposition}
  \providecommand{\remarkname}{Remark}
\providecommand{\theoremname}{Theorem}

\begin{document}
\address{Department of Mathematics \\ Purdue University \\ 150 N. University St. \\ West Lafayette IN \\ USA \\ 47907 }
\email{pdesilva@math.purdue.edu}

\begin{onehalfspace}

\title{\noindent Rank Constrained Homotopies}
\end{onehalfspace}

\author{\noindent Kaushika De Silva}
\begin{abstract}
\noindent For any $n\geq k\geq l\in\mathbb{N},$ let $S(n,k,l)$ be
the set of all those non-negative definite matrices $a\in M_{n}(\mathbb{C})$
with $l\leq\text{rank }a\leq k$. Motivated by applications to $C^{*}$-algebra
theory, we investigate the homotopy properties of continuous maps
from a compact Hausdorff space $X$ into sets of the form $S(n,k,l).$
It is known that for any $n,$ if $k-l$ is approximately
4 times the covering dimension of $X$ then there is only one homotopy
class of maps from $X$ into $S(n,k,l)$, i.e. $C(X,S(n,k,l))$ is
path connected. In our main Theorem we improve this bound by a factor
of 8. By combining classical homotopy theory methods with $C^{*}$-algebraic
techniques we also show that if $\pi_{r}(S(n,k,l))$ vanishes for
all $r\leq d$ then $C(X,S(n,k,l))$ is path connected for any compact Hausdorff $X$
with covering dimension not greater than $d$. 
\end{abstract}

\maketitle

\section{\noindent Introduction.}

\noindent For $n\in\mathbb{N}$, let $(M_{n})_{+}$ stand for the
$n\times n$ non-negative definite matrices over $\mathbb{C}$ and
for fixed $k,l\in\mathbb{N}$ with $n\geq k\geq l$ define the space
$S(n,k,l)$ by,

\noindent 
\[
S(n,k,l)=\left\{ b\in(M_{n})_{+}|\,\ensuremath{l\leq}\text{rank}(\ensuremath{b)\leq k}\right\} .
\]

\noindent Let $S(n,k,l)$ be given the subspace topology induced by
the norm topology of $M_{n}$. We focus on homotopy properties of
$S(n,k,l)$. Our main theorem is the following;

\noindent \textbf{Theorem 3.5}. \textit{Let $X$ be a compact Hausdorff
space $X$ with $\left\lfloor \frac{dim\, X}{2}\right\rfloor \leq k-l.$
Then there is only one homotopy class of functions $f:X\to S(n,k,l),$
i.e. the function space $C(X,S(n,k,l))$ is path connected.}

\noindent Here, by $dim\: X$ we mean the covering dimension of the
space $X$ and $\left\lfloor m\right\rfloor $
stands for the largest integer which is less than or equal to $m\in\mathbb{R}.$
An immediate consequence of Theorem 3.5 is that $\pi_{r}(S(n,k,l))=0, \forall r\leq2(k-l)+1$.

\noindent The spaces $S(n,k,l)$, or rather their homotopy properties
have relevance in $C^{*}$-algebra theory and our motivation for this
study is mainly due to this natural connection. 

\noindent For a unital $C^{*}$-algebra $A$, let $T(A)$ denote the
tracial state space. In other words $T(A)$ is the set of all normalized positive linear functionals $\tau$ on $A$ which satisfy the trace property $\tau(ab)=\tau(ba),\forall a,b\in A$. An important object of study for a $C^{*}$-algebraist
is the space $LAff_{b}(T(A))^{++}$ of all bounded, strictly positive,
lower semi-continuous affine maps on $T(A)$. For a positive element
$a$ in $A$, $\tau\mapsto\iota_{a}(\tau)=\lim_{n}\tau(a^{\nicefrac{1}{n}})$
defines a positive lower semi-continuous affine map on $T(A)$. This
definition extends to positive elements in $M_{n}(A),n\geq2$, in
which case $\tau$ in the right hand side stands for the non-normalized
canonical extension of $\tau$ to $M_{n}(A)$. If $A$ is simple $\iota_{a}$
is strictly positive for all $a\in M_{n}(A)$ for any $n\in\mathbb{N}$.
For sufficiently regular simple $C^{*}$-algebras (as in \cite{BPT},\cite{tm1}),
$\lbrace\iota_{a}\colon a\in M_{\infty}(A)_{+}\rbrace$ is dense in
$LAff_{b}(T(A))^{++}$. On the other hand, density of $\lbrace\iota_{a}\colon a\in M_{\infty}(A)_{+}\rbrace\subseteq LAff(T(A))^{++}$
provides one with useful tools to work with to derive classification
results and to establish regularity properties such as $\mathcal{Z}$-stability
\cite{BT,tm1}. In \cite[Theorem 3.4]{tm1}, Toms shows that for unital
simple ASH algebras with slow dimension growth $\lbrace\iota_{a}\colon a\in M_{\infty}(A)\rbrace$
is dense in $LAff_{b}(T(A))^{++}$. He then uses this fact to show that
such algebras are $\mathcal{Z}$-stable and hence derives classification
results. The proof of \cite[Theorem 3.4]{tm1} heavily depends on
homotopy properties of the spaces $S(n,k,l)$ \cite[Section 2]{tm1}.
Our main result here is an improvement of \cite[Proposition 2.5]{tm1},
which is a key ingredient in proving that $\lbrace\iota_{a}\colon a\in M_{\infty}(A)_{+}\rbrace\subseteq LAff(T(A))^{++}$
is dense for the class of algebras mentioned above. However, one should
note that this improvement does not have a direct impact on the main
result of \cite{tm1}.

\subsection*{Spaces $S(n,k,l)$ as a generalization of complex Grammarians}

\

\noindent Let us consider the special case $k=l.$ Recall that $G_{k}(\mathbb{C}^{n})$
stands for the complex Grassmann variety of $k$-dimensional subspaces
of $\mathbb{C}^{n}.$ Given a $k$ -dimensional subspace $V$ of $\mathbb{C}^{n},$
one may identify it uniquely with the orthogonal projection of $\mathbb{C}^{n}$
on $V$. This identification leads to a natural homeomorphism from
$G_{k}(\mathbb{C}^{n})$ to $P_{k}(\mathbb{C}^{n})$, where $P_{k}(\mathbb{C}^{n})$
is the set of all rank $k$ projections in $M_{n}(\mathbb{C})$ equipped
with the norm topology. It is not hard to see that the inclusion $P_{k}(\mathbb{C}^{n})\subset S(n,k,k)$
induces a homotopy equivalence. Hence, $G_{k}(\mathbb{C}^{n})$ is
homotopy equivalent to $S(n,k,k).$ Thus in a sense, the spaces $S(n,k,l)$
can be viewed as a generalization of the Grassmann varieties, at least
for homotopy interests. By setting $k=l$ in Theorem 3.5 and using
the homotopy equivalence of $S(n,k,k)$ with $G_{k}(\mathbb{C}^{n})$
one can derive that $G_{k}(\mathbb{C}^{n})$ is simply connected (i.e.
$\pi_{1}(G_{k}(\mathbb{C}^{n}))=0$) for any pair of $k,n$, which
is a well known classical result. However, it should be noted that
our proof of Theorem 3.5 does not in provide a alternate proof of
this fact, rather we use a stronger classical result in our proof.

\subsection*{Rank varying bundles associated with the spaces $S(n,k,l)$}

\

Given a continuous map $f:X\to G_{k}(\mathbb{C}^{n})$, one has the
associated locally trivial vector bundle $f^{*}(\gamma)$ over $X$,
which is the pullback of the canonical $k$-dimensional vector bundle
$\gamma$ over $G_{k}(\mathbb{C}^{n})$ to $X$.

\noindent In a similar vein, for each map $a\in C(X,(M_{n})_{+})$
there is a naturally associated bundle (in the sense of \cite[Chapter 2]{Huse})
$\epsilon_{a}$ over $X$ with the total space, 
\[
E_{a}=\left\{ (x,v)\in X\times\mathbb{C}^{n}|v\in a(x)(\mathbb{C}^{n})\right\} 
\]
and $\pi_{1}:E_{a}\to X$ being the natural coordinate projection. 
Admittedly, a typical bundle obtained in this nature is not necessarily
a vector bundle in the classical sense (\cite{At,Huse}). In fact
such a bundle may not even have a constant fiber. Still, for a bundle
$\epsilon_{a}$ each fiber admits a natural vector space structure,
which to an extent preserves local triviality. Moreover these bundles
occur naturally in $C^{*}$-algebra theory (see \cite{tm2},\cite{tm3})
and understanding this association would make the techniques used
in section 3 more intuitive. Thus, before moving on to the next section
we outline the structure of these bundles.

\subsection*{The structure of bundles associated with the spaces $S(n,k,l)$}

\

\noindent Observe that if $a(x)\in S(n,k,k),\forall x\in X$, then
the bundle $\epsilon_{a}$ is indeed a locally trivial vector bundle
over $X$. However, if $a$ is not of constant rank the associated
bundle $\epsilon_{a}$ is not locally trivial in the usual sense.
Still, if we set $E_{i}=\left\{ x\in X:\text{ rank}(a(x))=n_{i}\right\} $,
then continuity of $a$ implies that the support projection of $a$
is continuous on $E_{i}$. Here, by support projection of $a$ we
mean the function which assigns each $x$ to the orthogonal projection
on the subspace $a(x)(\mathbb{C}^{n})$. Hence, the restriction of
$\epsilon_{a}$ to $E_{i}$ - denoted by $\epsilon_{a}\restriction_{E_{i}}$,
is a locally trivial vector bundle over $E_{i}$. In this manner we
can partition $X$ into a finite collection of subsets, such that
the restriction of $\epsilon_{a}$ to each subset is a locally trivial
bundle of constant rank. One may now consider the possibility of applying classical vector bundle theory \cite{At,Huse} to establish the structure of $\epsilon_a$ in some local sense, i.e the structure of $\epsilon_{a}\restriction_{E_{i}}$ for each $i$. But the issue with this is that the subsets $E_{i}$ formed here are highly non regular where as to apply classical theory of vector bundles the base spaces need to satisfy regularity properties such as compactness. We can overcome this
if $q_{i}$- the support projection of $a$ on $E_{i}$, extends to
a projection $p_{i}\in M_{n}(C(\overline{E}_{i}))$ for each $i$,
where $\overline{E}_{i}$ is the closure $E_{i}$. If this is the
case, then we can apply classical vector bundle theory (see \cite{At,Huse})
to understand the structure of bundles $\epsilon_{p_{i}}$. Moreover,
if its also the case that for any two distinct $i,j$ with $\overline{E}_{i}\cap\overline{E}_{j}\neq\emptyset$
the extended projections $p_{i}$ and $p_{j}$ are \textsl{comparable}
on $\overline{E}_{i}\cap\overline{E}_{j}$, then we can expect to
use structural properties of the bundles $\epsilon_{p_{i}}$ (i.e
local structure of $\epsilon_{a}$) to establish the global structure
of $\epsilon_{a}$. 

\noindent The above considerations would not hold true for arbitrary $a\in M_{n}(C(X))_{+}.$
However, {[}\cite{tm3}, c.f. \cite{Phil}{]} introduces a special
class of elements in $M_{n}(C(X))_{+}$ called\textsl{ well supported
}positive elements, which are well behaved in the above sense. For
an element $a$ in this class (Definition 2.2), each $q_{i}$ extends
to $p_{i}\in M_{n}(C(\overline{E}_{i}))$ in such a way that for $i<j$
with $\overline{E}_{i}\cap\overline{E}_{j}\neq\emptyset$, $p_{i}$
is a sub-projection of $p_{j}$ of on $\overline{E}_{i}\cap\overline{E}_{j}$.
From results in \cite{tm2} it follows that for any $n,k,l\in\mathbb{N}$
the set of all well supported positive elements contained in $C(X,S(n,k,l))$
is homotopy equivalent to $C(X,S(n,k,l))$ {[}Lemma 3.1{]}. This fact
plays a crucial role in the proof of our main theorem. 

\noindent In section 2, we briefly recall some well known definitions
and results from vector bundle theory \cite{At,Huse} as well as some
other tools and notations that are necessary. In section 3, following
the ideas from \cite{Phil} and \cite{tm1} we prove the main result.
Section 4 (Theorem 4.6) shows that for any $n,k,l\in\mathbb{N}$ the
path connectedness of $C(X,S(n,k,l))$ depends solely on homotopy
of the space $S(n,k,l).$ In section 4 we do not assume that $k-l\geq\lfloor\frac{dim\, X}{2}\rfloor$
and the proof of Theorem 4.6 is achieved mainly through applications
of classical homotopy theory results (see \cite{Wh}), to the space
$S(n,k,l).$ 

\noindent \textbf{Acknowledgments}\textsl{.} I thank my adviser Professor
Andrew Toms for his encouragement, for several helpful conversations
and suggestions that were invaluable. This work wouldn't have been
possible without his efforts.  I am also thankful to the reviewer of
an earlier version of this article, for providing thoughtful suggestions,
comments and references.

\section{\noindent Notations and Preliminaries.}

\subsection*{Notations and Conventions.}

\noindent Unless stated otherwise we assume $X$ to be a compact Hausdorff
space. $C(X,Y),\, M_{n\times m}(C(X)),\, M_{n}(C(X))$ all have the
usual meanings and $C(X)=C(X,\mathbb{C})$. We will often identify
$M_{n}(C(X))$ with $C(X,M_{n}),$ where $M_{n}$ denotes $M_{n}(\mathbb{C})$.
$(M_{n})_{+}$ denotes all non negative definite matrices in $M_{n}$
and its customary to call these as positive elements of $M_{n}.$

\noindent Given $x\in M_{n}$, $x^{*}$ denotes the conjugate transpose
of $x$. By a projection $p$ in $M_{n}$ we mean a self adjoint idempotent,
i.e $p=p^{2}=p^{*}.$ Note that the notions of the conjugate of an
element, positive elements and projections carry over to $M_{n}(C(X))$
via point wise defined operations. We will use $M_{n}(C(X))_{+}$
to denote positive elements in $M_{n}(C(X))$.

The base field for all vector spaces and vector bundles that we will
consider is the field of complex numbers.

\subsection*{Murray-von Neumann semi group of $C(X)$, semi group of isomorphism
classes of vector bundles over $X$ and Serre-Swan Theorem.}

\

\noindent Let $P_{\infty}(C(X))=\underset{m\in\mathbb{N}}{\bigcup}P(M_{m}(C(X)))$,
where $P(M_{m}(C(X)))$ denote the set of all projections in $M_{m}(C(X))$.
A pair $p\in P(M_{m}(C(X)))$ and $q\in P(M_{n}(C(X)))$ are said
to be Murray-von Neumann equivalent, denoted $p\sim q$, if there
is some \mbox{$v\in M_{n,m}(C(X))$} with $p=v^{*}v$ and $q=vv^{*}$.
The Murray-von Neumann equivalence class of $p\in P_{\infty}((C(X)))$
is denoted by $\left[p\right]_{0}$ and the Murray-von Neumann semigroup
of $C(X)$ is the semigroup 
\[
D(C(X))=\left(P_{\infty}(C(X))/\sim,+\right),
\]
where $\left[p\right]_{0}+\left[q\right]_{0}=\left[p\oplus q\right]_{0}$. 

\noindent Let $\text{Bun}(X)$ denote the set of locally trivial complex
vector bundles over $X$, and let $\text{Bun}_{k}(X)$ denote the
set of all $\text{\ensuremath{\epsilon\in}Bun}(X)$ of constant fiber
dimension $k$. Write $\text{Vect}(X)$ to denote the semigroup of
all isomorphism classes of bundles in $\text{Bun}(X)$ with addition
being induced via the direct sum of bundles. The well known natural
identification of $D(C(X))$ with Vect($X$), induced through the
Serre-Swan Theorem \cite[Theorem 2]{Swn} (which states that $\text{Bun}(X)$
as a category with morphisms being the morphisms of vector bundles
is equivalent to the category of finitely generated projective $C(X)$-modules),
is central to our study. For the sake of completeness and to introduce
some of the notations and terms that we will use, let us describe
this identification. 

\noindent For $p\in P(M_{m}(C(X)))\subset M_{n}(C(X))_{+}$, let $\epsilon_{p}=(E_{p},\pi_{1},X)$
be the bundle over $X$ defined as in the introduction. Since $p$
is a continuous projection, $\epsilon_{p}$ is a locally trivial vector
bundle over $X$. Moreover, $p\sim q$ for some $q\in P_{\infty}(C(X))$,
if and only if \mbox{$\mathcal{\epsilon}_{p}\cong\mathcal{\epsilon}_{q}$}.
This gives a well defined injection $\psi:D(C(X))\to\text{Vect}(X)$,
which is a semigroup morphism and preserves dimensions. On the other
hand, suppose $\epsilon$ is a locally trivial vector bundle over
$X$, with total space $E$ and the fiber at $x\in X$ being $E_{x.}$
From \cite[Corollary 5]{Swn}, $\epsilon$ is a direct summand of
a trivial bundle. That is, there is some $n\in\mathbb{N}$ and a bundle
$\epsilon^{\perp}$ over $X$ such that $\epsilon\oplus\epsilon^{\perp}\cong\theta^{n}$,
where $\theta^{n}$ is the $n$ dimensional product bundle. Bundle
$\epsilon^{\perp}$ is called a complementary bundle for $\epsilon.$
Let $F$ denote the total space of $\epsilon^{\perp}$and let $F_{x}$
denote the fiber at $x\in X.$ Then, we may assume that $E_{x}\oplus F_{x}=\mathbb{C}^{n},\forall x\in X$.
Let $p_{\epsilon}(x)$ denote the orthogonal projection of $\mathbb{C}^{n}$
on the fiber $E_{x}\subset\mathbb{C}^{n}$. Then, the local triviality
of $\epsilon$ ensures that the map $x\mapsto p_{\epsilon}(x)$ is
continuous. Indeed, $\psi(p_{\epsilon})=\epsilon$ and thus $\psi$
is a surjection.

\noindent Recall that a vector bundle $\eta$ over $X$ is called
trivial if $\eta$ is isomorphic to a product bundle over $X$. We
say that a projection $p$ in $M_{n}(C(X))$ is a \textsl{trivial}
projection if $\epsilon_{p}$ is trivial, where $\epsilon_{p}$ is
defined as before. From the preceding paragraph it is clear that if
$p,q\in M_{n}(C(X))$ with $p\sim q$, then $p$ is trivial iff $q$
is trivial.

\subsection*{Bounding the dimension of $\epsilon^{\perp}$.}

\

From the preceding for each $\epsilon\in\text{Bun}(C(X))$ there is
$\epsilon^{\perp}$ so that $\epsilon\oplus\epsilon^{\perp}\cong\theta^{n}$
for some $n\in\mathbb{N}.$ We would like to have a bound on the dimension
on $\epsilon^{\perp}$. Having such a bound will be useful for our
work in section 3 as there we focus on $S(n,k,l)$ for a fixed triple
$(n,k,l)$. Let us now address this concern and provide a bound for
the dimension of $\epsilon^{\perp}$ which depends only on the dimension
of the base space $X.$

Recall the following Theorem on locally trivial vector bundles over
a compact Hausdorff space. Here, $\lceil x\rceil$ stands for the
smallest integer $n$ with $x\leq n$.
\begin{thm}
\noindent Let $X$ be a finite dimensional compact Hausdorff space.
Let $\epsilon,\gamma\in\text{Bun}(X)$ and $\theta^{n}$ denote the
product bundle of dimension $n$. Let $k$ be the dimension of $\epsilon$
and write $m=\lfloor\frac{dim\, X}{2}\rfloor$. The following hold,

\noindent \textup{1.} $\epsilon\cong\eta\oplus\theta^{k-m}$ for some
$\eta\in\text{Bun}_{m}(X)$.

\noindent \textup{2}. If $k\geq\left\lceil \frac{dim\, X}{2}\right\rceil $
and $\epsilon\oplus\delta\cong\gamma\oplus\delta$, where $\delta$
is a trivial bundle over $X,$ then $\epsilon\cong\gamma$. 
\end{thm}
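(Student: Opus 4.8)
The plan is to prove both statements by translating the geometric operations of splitting off and cancelling trivial summands into the existence, respectively the uniqueness up to vertical homotopy, of sections of associated fibre bundles, and then running obstruction theory against the covering dimension of $X$. The two relevant fibres are the complex Stiefel manifold $V_{j}(\mathbb{C}^{k})$ of $j$-frames in $\mathbb{C}^{k}$, which is $2(k-j)$-connected with $\pi_{2(k-j)+1}(V_{j}(\mathbb{C}^{k}))\cong\mathbb{Z}$, and the sphere $S^{2k+1}$, which is $2k$-connected (see \cite{Huse}). Throughout I would fix a Hermitian metric on each bundle, so that every monomorphism of bundles has an orthogonal complement. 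The only property of $X$ the argument uses is that covering dimension $d$ forces $\check{H}^{j}(X;\mathcal{G})=0$ for every $j>d$ and every coefficient group $\mathcal{G}$.

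For (1), I would form the bundle $V_{k-m}(\epsilon)$ of complex $(k-m)$-frames in $\epsilon$; a section of it is exactly a choice of $k-m$ everywhere linearly independent sections of $\epsilon$, and these span a trivial subbundle $\theta^{k-m}\subseteq\epsilon$ whose orthogonal complement $\eta$ has fibre dimension $m$ and satisfies $\epsilon\cong\eta\oplus\theta^{k-m}$. The fibre of $V_{k-m}(\epsilon)$ is $V_{k-m}(\mathbb{C}^{k})$, which is $2m$-connected, so the obstructions to extending a section over successive skeleta lie in $\check{H}^{i+1}(X;\pi_{i}(V_{k-m}(\mathbb{C}^{k})))$ with $i\geq 2m+1$, i.e.\ in cohomology of degree at least $2m+2$. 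Since $m=\lfloor\dim X/2\rfloor$ forces $\dim X\leq 2m+1$, every such group vanishes and the section exists. I would emphasize that it is precisely the $2m$-connectivity of the \emph{complex} Stiefel manifold, rather than the weaker estimate one gets by splitting off line bundles one at a time, that lets the construction descend all the way to rank $m$.

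For (2), I would reduce to cancelling a single trivial line bundle by induction: writing $\delta\cong\theta^{r}$, at each stage the two bundles being compared have common fibre dimension $r'\geq k$, so the hypothesis $k\geq\lceil\dim X/2\rceil$, equivalently $\dim X\leq 2k$, gives $\dim X\leq 2r'$. For the single step, suppose $\alpha,\beta$ have fibre dimension $r'\geq k$ and $\varphi\colon\alpha\oplus\theta^{1}\to\beta\oplus\theta^{1}$ is an isomorphism. Then $\varphi$ carries the unit section of the left-hand $\theta^{1}$ to a nowhere-vanishing section $s$ of $\beta\oplus\theta^{1}$ whose orthogonal complement is isomorphic to $\alpha$, while the standard unit section $s_{0}$ has orthogonal complement $\beta$. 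Both are sections of the sphere bundle $S(\beta\oplus\theta^{1})$ with fibre $S^{2r'+1}$; the obstructions to a vertical homotopy from $s$ to $s_{0}$ lie in $\check{H}^{i}(X;\pi_{i}(S^{2r'+1}))$ with $i\geq 2r'+1$, and these vanish because $\dim X\leq 2r'$. A vertical homotopy induces an isomorphism of orthogonal complements, so $\alpha\cong\beta$; applying this repeatedly cancels all $r$ copies and yields $\epsilon\cong\gamma$.

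The main obstacle is not the homotopy-theoretic bookkeeping but the fact that $X$ is only assumed compact Hausdorff of finite covering dimension, whereas obstruction theory is cleanest over CW complexes. I would handle this either by invoking the version of obstruction theory valid for normal spaces of finite covering dimension together with the vanishing of Čech cohomology above the dimension, or by using that every vector bundle over a compact Hausdorff space, and every homotopy between sections, is pulled back from one over a finite CW complex, thereby reducing both (1) and (2) to the CW case where the connectivity estimates above apply verbatim. Checking that the relevant obstruction classes really are the primary ones, so that the single nonzero homotopy group of each fibre is all that matters, is the one place where care is required.
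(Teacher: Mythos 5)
Your CW-complex argument is sound and is, in substance, exactly the proof the paper points to: the paper does not prove Theorem 2.1 itself, but cites Husemoller \cite[Chap.~9, Theorems 1.2 and 1.5]{Huse}, whose proofs are the same two obstruction-theoretic computations you give --- sections of the frame bundle with fibre the $2m$-connected Stiefel manifold $V_{k-m}(\mathbb{C}^{k})$ for part (1), and vertical homotopies of sections of a sphere bundle with fibre $S^{2r'+1}$ for the one-line cancellation step in part (2). Your connectivity bookkeeping is correct: $m=\lfloor \frac{\dim X}{2}\rfloor$ gives $\dim X\leq 2m+1$ against obstructions in degree $\geq 2m+2$, and $k\geq\lceil\frac{\dim X}{2}\rceil$ gives $\dim X\leq 2r'$ against obstructions in degree $\geq 2r'+1$, with no local-coefficient trouble since the fibres are simply connected in the relevant range.

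The genuine gap is in your passage from CW complexes to general compact Hausdorff $X$, which is precisely the point the paper outsources to Goodearl. Your second proposed reduction --- that every bundle over compact Hausdorff $X$ is pulled back from a finite CW complex, ``where the connectivity estimates above apply verbatim'' --- fails as stated: the finite complex one obtains (e.g.\ the Grassmannian $G_{k}(\mathbb{C}^{N})$ carrying the classifying map) has dimension unrelated to $\dim X$, and every one of your estimates is against $\dim X$. What is needed is a factorization, up to homotopy and isomorphism, through a finite complex of dimension \emph{at most} $\dim X$, and that is not automatic; it is the content of the machinery the paper cites: by \cite{Nagami}, $X$ is an inverse limit of compact metric spaces $X_{\alpha}$ with $\dim X_{\alpha}\leq\dim X$, each of which is an inverse limit of finite simplicial complexes of no larger dimension, and since $\bigcup_{\alpha}\psi_{\alpha}^{T}(C(X_{\alpha}))$ is dense in $C(X)$, projections (hence bundles) and the partial isometries implementing their equivalences are, up to small perturbation, pulled back from finite stages --- this is Goodearl's argument \cite[Theorem 2.5]{Goodreal}, and it is the same approximation scheme the paper itself runs in its Proposition 3.4 and Lemma 4.5. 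Your first alternative (obstruction theory for normal spaces of finite covering dimension plus vanishing of \v{C}ech cohomology above the dimension) points in the right direction, but there is no off-the-shelf obstruction theory for sections of bundles over non-CW compacta that you can simply invoke; making it precise again amounts to the nerve/inverse-limit factorization with dimension control. So keep your CW argument as the core, but replace the reduction step by the Nagami--Goodearl factorization (or a Marde\v{s}i\'{c}-type factorization theorem) rather than the classifying-map pullback.
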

\noindent A proof of Theorem 2.1 in the case of $X$ being a $CW$-complex is
given by Husemoller \cite[Chap. 9, Theorems 1.2 and 1.5]{Huse}. Any
compact Hausdorff space $X$ of dimension $d$ can be realized as
the inverse limit of compact metric spaces $X_{\alpha}$, with $\text{dim }X_{\alpha}\leq d$
for each $\alpha$ \cite[Chap 27, Theorem 8]{Nagami}
and any compact metric space $Y$ is homeomorphic to an inverse limit
of finite simplicial complexes of dimension not greater than that
of $Y$ \cite[Chap. 27, Theorem 12]{Nagami}). In
\cite[Theorem 2.5]{Goodreal}, Goodearl uses these identifications and Husmoller's results \cite[Chap. 9, Theorems 1.2 and 1.5]{Huse} to prove the Theorem for arbitrary compact Hausdorff spaces.

\noindent One immediate consequence of Theorem 2.1 is that, given
$\epsilon\in\text{Bun}_{k}(X)$ we may choose a complementary bundle
$\epsilon^{\perp}$ so that the dimension of $\epsilon^{\perp}$ is
at most $\left\lfloor \frac{dim\, X}{2}\right\rfloor $.

\noindent To observe this, first choose some $\delta\in\text{Bun}(X)$,
say of dimension $t$, so that $\epsilon\oplus\delta\cong\theta^{k+t}$.
If $t>\left\lfloor \frac{dim\, X}{2}\right\rfloor $, by part 1 of
the Theorem, $\delta=\gamma\oplus\theta^{t-\left\lfloor \frac{dim\, X}{2}\right\rfloor }$
for some $\gamma\in\text{Bun}_{\left\lfloor \frac{dim\, X}{2}\right\rfloor }(X)$.
If dimension of $\epsilon\oplus\gamma$ is $k_{1}$, then $k_{1}=k+\left\lfloor \frac{dim\, X}{2}\right\rfloor \geq\left\lceil \frac{dim\, X}{2}\right\rceil $
and $(\epsilon\oplus\gamma)\oplus\theta^{t-\left\lfloor \frac{dim\, X}{2}\right\rfloor }\cong\theta^{k+t}$.
Hence by part 2 of Theorem 2.1, it follows that $\epsilon\oplus\gamma\cong\theta^{k+\left\lfloor \frac{dim\, X}{2}\right\rfloor }$.
Thus for each $\epsilon\in\text{Bun}_{k}(X)$ we may choose $\epsilon^{\perp}$
to be of dimension $k+\left\lfloor \frac{dim\, X}{2}\right\rfloor .$ 

\noindent Upshot of all this is that for fixed $n,k\in\mathbb{N}$
with $n\geq k+\left\lfloor \frac{dim\, X}{2}\right\rfloor $, by following
the methods discussed in the preceding subsection we can now construct
a bijective correspondence between the isomorphism classes of $\text{\textbf{\text{Bun}}}_{k}(X)$
and Murray-von Neumann equivalence classes of projections in $P_{k}(M_{n}(X))$.
For convenience let us call this the \textsl{Serre-Swan correspondence.}
In section 3, we will combine this correspondence with part 1 of Theorem
2.1 to construct continuous maps \mbox{$a:X\to(M_{n})_{+}$}, which
satisfy specified rank constrains.

\subsection*{Well supported positive elements.}

\

\noindent Given $a\in M_{n}(C(X))_{+}$, the rank function of $a$
- denoted $r_{a}$, is the (lower semi-continuous) function defined
on $X$ by $x\mapsto\text{rank}(a(x))$. For $a,b\in M_{n}(C(X))$
if $a-b\in M_{n}(C(X))_{+},$ we write $a\leq b.$

\noindent We now give the definition of a\textsl{ well supported }positive
element in $M_{n}(C(X))$, given in \cite{tm3}. 
\begin{defn}
\noindent Let $X$ be a compact Hausdorff space and let $a\in M_{n}(C(X))_{+}$.
Suppose that $n_{1}<n_{2}<\cdot\cdot\cdot<n_{L}$ denote all the values
that $r_{a}$ takes and set $E_{i}=\left\{ x\in X\,:\text{}r_{a}(x)=n_{i}\right\} $.
We say that $a$ is \textsl{well supported} if, for each $1\leq i\leq L$
there is a projection $p_{i}\in M_{n}(C(\overline{E}_{i}))$ such
that $\lim_{r\to\infty}a(x)^{1/r}=p_{i}(x),\forall x\in E_{i}$, and
$p_{i}(x)\leq p_{j}(x)$ whenever $x\in\overline{E}_{i}\cap\overline{E}_{j}$,
and $i\le j$.
\end{defn}

\noindent Following Theorem can be used to replace arbitrary positive
elements by well supported ones up to homotopy (see Lemma 3.1).

\begin{thm}
\noindent \cite[Theorem 3.9]{tm2}. Let X be a compact Hausdorff space
and let \mbox{$a\in M_{n}(C(X))_{+}$}. Then, for every $\delta>0$,
there exists a well supported element $b\in M_{n}(C(X))_{+}$ such
that $b\leq a$ and $||a-b||<\delta$, with the range of $r_{b}$
equal to the range of $r_{a}$.\end{thm}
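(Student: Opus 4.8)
The plan is to reduce everything to the continuous eigenvalue functions of $a$ and then to push each rank drop onto a closed set by a localized spectral truncation. Write $\lambda_{1}(x)\ge\lambda_{2}(x)\ge\cdots\ge\lambda_{n}(x)\ge 0$ for the eigenvalues of $a(x)$ listed with multiplicity; each $\lambda_{j}$ is continuous on $X$ and $r_{a}(x)=\#\{j:\lambda_{j}(x)>0\}$. Lower semicontinuity of $r_{a}$ is then just the observation that $U_{i}:=\{x:r_{a}(x)\ge n_{i}\}=\{x:\lambda_{n_{i}}(x)>0\}$ is open; these sets are nested, $X=U_{1}\supseteq U_{2}\supseteq\cdots\supseteq U_{L}$, with $E_{i}=U_{i}\setminus U_{i+1}$ and $\overline{E_{i}}\subseteq\{r_{a}\le n_{i}\}$. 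Because the only ranks attained are $n_{1}<\cdots<n_{L}$, every index $j$ in a block $\{n_{i}+1,\dots,n_{i+1}\}$ satisfies $\lambda_{j}(x)>0\iff x\in U_{i+1}$, so the eigenvalues within a block vanish simultaneously and any truncation reduces the rank only by whole blocks. The conditions $b\le a$ and $\|a-b\|<\delta$ will be automatic for the truncations used, so the entire content is the construction of the support projections $p_{i}$ on $\overline{E_{i}}$ together with the verification of the nesting $p_{i}\le p_{j}$ for $i\le j$.

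I would proceed by induction on the number $L$ of rank values. For $L=1$ the rank is constant, $x\mapsto\lim_{r\to\infty}a(x)^{1/r}$ is already continuous on $X=E_{1}$, and $a$ is well supported, so $b=a$ works. For the inductive step consider the top stratum $U_{L}=\{r_{a}=n_{L}\}$ and its closed complement $C=\{r_{a}\le n_{L-1}\}$. On $U_{L}$ the top block $\lambda_{n_{L-1}+1},\dots,\lambda_{n_{L}}$ is positive and tends to $0$ along $\partial U_{L}$. Fixing $\eta\in(0,\delta)$, I would use a continuous scalar cut-off and the functional calculus element $(a-\eta)_{+}$ (that is, $t\mapsto\max(t-\eta,0)$ applied to $a$) to define $b$ so that on a closed collar of $\partial U_{L}$ the entire top block is collapsed to $0$, sending the rank there from $n_{L}$ down to $n_{L-1}$, while away from the collar $b$ agrees with $a$. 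The resulting element has top stratum $\{r_{b}=n_{L}\}$ with closure inside $U_{L}$, where the rank is locally constant, so its support projection $p_{L}$ extends continuously; taking the collar to be a sublevel set of $\lambda_{n_{L}}$ guarantees that the surviving rank-$n_{L-1}$ subspace on $\overline{\{r_{b}=n_{L}\}}\cap C'$ is exactly the one already seen inside $U_{L}$, which gives the nesting $p_{L-1}\le p_{L}$. One then applies the inductive hypothesis to $b$ restricted to the enlarged closed set $C'=X\setminus\{r_{b}=n_{L}\}$ (a positive element with rank values $n_{1}<\cdots<n_{L-1}$) to produce $p_{1},\dots,p_{L-1}$, and glues.

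I expect the main obstacle to be the simultaneous and \emph{continuous} collapse of the top block across boundary points where the rank of $a$ drops by more than one level at once: at such points several blocks become small together, there is no spectral gap separating the block to be removed from the blocks to be kept, and a naive functional calculus at a single threshold would create intermediate ranks not attained by $a$, violating the requirement that the range of $r_{b}$ equal that of $r_{a}$. I would handle this by choosing thresholds adapted to the finite set $\{n_{1},\dots,n_{L}\}$ and by separating the rank-drop locus spatially: using normality of $X$ (Urysohn functions) to localise the truncation to nested closed collars, one for each $U_{i}$, so that the truncations at successive levels are themselves nested. Continuity of the assignment $a\mapsto(a-\eta)_{+}$ then yields a continuous $b$. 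The remaining verifications — that $b\le a$, that $\|a-b\|\le\eta<\delta$, that each $p_{i}$ is a genuine projection with $p_{i}(x)=\lim_{r\to\infty}b(x)^{1/r}$ on $E_{i}$, and that $p_{i}\le p_{j}$ for $i\le j$ — are then routine consequences of the nested-collar construction.
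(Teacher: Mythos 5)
First, a point of reference: the paper does not prove this statement. Theorem 2.3 is imported verbatim from \cite[Theorem 3.9]{tm2}, and the paper's only original contribution here is Remark 2.4, which observes that the simplicial hypothesis of the original is needed only for a part of that theorem not quoted, so that ``compact Hausdorff'' suffices for the version stated. Your sketch therefore has to be measured against the cited proof in \cite{tm2}, and on its own terms it is a plan with genuine gaps at exactly the points where the theorem is hard. Your framing is sound as far as it goes: the ordered eigenvalue functions are continuous, the sets $U_i=\{r_a\ge n_i\}$ are open and nested, eigenvalues within a block share a common zero set, $b\le a$ and $\|a-b\|<\delta$ are automatic for cut-downs, and your top-stratum observation (if $\overline{\{r_b=n_L\}}\subseteq U_L$ then $p_L$ is just the restriction of the support projection of $a$, which is continuous where $r_a$ is constant) is correct.

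But three steps do not survive scrutiny. (1) The claim that ``any truncation reduces the rank only by whole blocks'' is false as stated: within a block only the zero sets coincide, not the magnitudes, so any continuous scalar threshold interpolating between ``keep the whole top block'' and ``kill the whole top block'' must somewhere take values strictly inside the block, creating ranks not attained by $r_a$. You concede this later as ``the main obstacle,'' but the proposed repair --- thresholds adapted to $\{n_1,\dots,n_L\}$ and nested Urysohn collars --- is never carried out, and it is not routine: the transition region must be confined to where the \emph{top} eigenvalue of every block being killed is already below the cut while the bottom eigenvalue of every block being kept stays above it, simultaneously for all blocks, including at points where several blocks vanish together (your sublevel set of $\lambda_{n_L}$, the block's \emph{bottom} eigenvalue, does not achieve this, since the top of the block can remain large on that collar). (2) Nothing in the sketch establishes that the mid-level support projections $p_i$ extend continuously to $\overline{E_i}$: continuity of a spectral projection requires a spectral gap uniform over the compact closure of the stratum, and at boundary points where eigenvalues tend to $0$ the eigenspaces can rotate without converging; arranging each $\overline{E_i}$ to sit inside a region with such a gap is the actual content of well-supportedness, and declaring the verification ``routine consequences of the nested-collar construction'' assumes precisely what must be proved. (3) The inductive gluing fails as described: the inductive hypothesis applied to $b\restriction_{C'}$ returns an element that is only norm-close to $b$, with no control on $\partial C'$, so the glued function is in general discontinuous; you would need a relative version of the theorem (approximation agreeing with prescribed data near the interface, with the nesting $p_{L-1}\le p_L$ preserved under the modification), which is neither formulated nor proved. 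These are the difficulties that make the argument in \cite{tm2} a genuinely multi-step construction rather than a collar-and-cutoff exercise, and your proposal, while aimed in the right direction, does not close them.
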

\begin{rem}
\noindent The Theorem stated above is only a part of Theorem 3.9 of
\cite{tm2}. There, in the hypothesis $X$ is assumed to be a finite
simplicial complex. However, the simplicial structure of $X$ is required
only for the second part of the Theorem, which guarantees that each
$\overline{E}_{i}$ corresponding to $b$ (as defined in 2.2) can
assumed to be a sub-complex of $X.$ For the Theorem as stated here,
$X$ being compact and Hausdorff suffices. 
\end{rem}

\subsection*{Some useful extension results.}

\

\noindent Given a function $f$ on $X$ and a $Z\subset X$, we use
$f\restriction_{Z}$ to denote the restriction of $f$ to $Z$.

\noindent Let $a\in M_{n}(C(X))_{+}$ be well supported and fix a
closed subset $Y\subset X$. Let $q$ be a trivial projection on $Y$,
majorized by the support projection of $a\restriction_{Y}$. In section
3 we would require to extend $q$ to a trivial projection defined
on $X$, such that extension is also majorized by the support projection
of $a$. To accomplish this, we follow ideas developed by  Toms \cite{tm3}
based on the work of Phillips \cite{Phil}. 
\begin{thm}
\noindent \cite[Proposition 4.2 (1)]{Phil}. Let X be a compact Hausdorff
space of dimension $d<\infty$, and let $Y\subset X$ be closed. Let
$p,q\in M_{n}(C(X))$ be projections with the property that $\text{rank}(q(x))+\left\lfloor \frac{d}{2}\right\rfloor \leq\text{rank}(p(x))$,
$\forall x\in X$. Let $s_{0}\in M_{n}(C(Y))$ be such that $s_{0}^{\ast}s_{0}=q\restriction_{Y}$
and $s_{0}s_{0}^{\ast}\leq p\restriction_{Y}$. It follows that there
is $s\in M_{n}(C(X))$ such that $s^{\ast}s=q,\, ss^{\ast}\leq p$,
and $s_{0}=s\restriction_{Y}$.\end{thm}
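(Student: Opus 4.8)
The plan is to recast the statement as a section--extension problem for a fiber bundle with a highly connected fiber. Observe that an element $s\in M_n(C(X))$ satisfying $s^*s=q$ and $ss^*\le p$ is exactly a partial isometry whose initial projection is $q(x)$ and whose final projection lies under $p(x)$ at every $x$; equivalently, $s$ restricts on $\mathrm{range}\,q(x)$ to a linear isometry into $\mathrm{range}\,p(x)$ and kills the complement. Thus such an $s$ is precisely a continuous section of the bundle $\mathcal{I}$ over $X$ whose fiber over $x$ is the set of isometric embeddings of the fiber of $\epsilon_q$ at $x$ into the fiber of $\epsilon_p$ at $x$, and the given $s_0$ is such a section over the closed set $Y$. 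So the whole assertion reduces to: this partial section extends over all of $X$. First I would extend $s_0$ to a continuous $M_n$-valued map by the Tietze theorem to have a starting point, and then correct it to a genuine section of $\mathcal{I}$ using the connectivity input below.

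Next I would analyze the fiber of $\mathcal{I}$. Since $p$ and $q$ are projections in $M_n(C(X))$, the associated bundles $\epsilon_p,\epsilon_q$ are locally trivial with locally constant ranks, so $X$ splits into finitely many clopen pieces on which $\mathrm{rank}\,q\equiv a$ and $\mathrm{rank}\,p\equiv b$ are constant; it suffices to argue on each piece. There the bundle $\mathcal{I}$ is a locally trivial fiber bundle associated to the unitary frame bundles of $\epsilon_q$ and $\epsilon_p$, with fiber the complex Stiefel manifold $V_a(\mathbb{C}^b)$ of orthonormal $a$-frames in $\mathbb{C}^b$. The key classical fact is that $V_a(\mathbb{C}^b)$ is $2(b-a)$-connected. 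By the rank hypothesis $b-a\ge\lfloor d/2\rfloor$, and since $2\lfloor d/2\rfloor\ge d-1$, the fiber of $\mathcal{I}$ is at least $(d-1)$-connected. This matching of the connectivity $2(b-a)\ge d-1$ with the hypothesis $b-a\ge\lfloor d/2\rfloor$ is what makes the bound exactly right.

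With these ingredients I would invoke the obstruction theory for extending sections of a fibration: the successive obstructions to extending a section from $Y$ across the relative skeleta of $(X,Y)$ lie in $H^{r+1}(X,Y;\pi_r(V_a(\mathbb{C}^b)))$ with the appropriate local coefficient system. Because the fiber is $(d-1)$-connected, $\pi_r(V_a(\mathbb{C}^b))=0$ for $r\le d-1$, so every obstruction in degree at most $d$ vanishes; and all cohomology in degree exceeding $d$ vanishes since $\dim X=d$. Hence no obstruction survives and the section, i.e.\ the desired $s$, exists with $s|_Y=s_0$. The main obstacle is that this obstruction machinery is stated for CW pairs, whereas $X$ is only compact Hausdorff of finite covering dimension. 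I would dispose of this exactly as in the discussion following Theorem 2.1: realize $(X,Y)$ through inverse limits of pairs of finite simplicial complexes of dimension at most $d$ (Nagami), transport the extension problem to a finite complex where the obstruction argument applies verbatim, and pass to the limit using \v{C}ech cohomology with the relevant coefficient systems. The delicate technical point in carrying this out is to keep the extended section compatible with the prescribed value $s_0$ on $Y$ throughout the limiting process; everything else is routine once the Stiefel connectivity and the dimension count are in place.
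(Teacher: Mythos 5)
The paper gives no proof of this statement: it is Theorem 2.5, quoted verbatim from Phillips \cite[Proposition 4.2(1)]{Phil}, so your attempt can only be compared with the source, and what you propose is essentially the argument given there. Identifying the desired $s$ with a section of the bundle of isometries of $\epsilon_{q}$ into $\epsilon_{p}$ (with fiber the complex Stiefel manifold $V_{a}(\mathbb{C}^{b})\cong U(b)/U(b-a)$ over each clopen piece where $\mathrm{rank}\,q=a$ and $\mathrm{rank}\,p=b$ are constant), invoking the $2(b-a)$-connectivity of that fiber, and observing that $2(b-a)\geq2\left\lfloor \frac{d}{2}\right\rfloor \geq d-1$ so that all obstructions in $H^{r+1}(X,Y;\pi_{r}(V_{a}(\mathbb{C}^{b})))$ vanish for $r+1\leq d$ while higher cohomology dies for dimension reasons, is exactly the mechanism that produces the constant $\left\lfloor \frac{d}{2}\right\rfloor$ in the hypothesis; your dimension count is correct. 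The only thin spot is the one you flag yourself: transporting the \emph{relative} extension problem, with the prescribed boundary value $s_{0}$ on $Y$, from finite complexes to general compact Hausdorff pairs is genuine work rather than routine limiting --- one must approximate $s_{0}$ by partial isometries pulled back from the approximating complexes and repair them by functional calculus, much as this paper does in the proof of Proposition 3.4, or else invoke a cross-section extension theorem already formulated for finite-dimensional paracompact bases with highly connected ANR fibers, which is how the cited source disposes of it. Modulo filling in that standard but nontrivial step, your proposal is correct and coincides with the known proof.
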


\begin{cor}
\noindent \cite[corollary 2.7. (ii)]{tm3}. Let $X$ be a compact
Hausdorff space of dimension $d<\infty$, and let $E_{1},...,E_{k}$
be a cover of $X$ by closed sets. Let $q\in P(M_{n}(C(X)))$ and
for each $i\in{1,...,k}$ let \mbox{$p_{i}\in M_{n}(C(E_{i}))$} be
a projection of constant rank $n_{i}$. Assume that $n_{1}<n_{2}<\cdot\cdot\cdot<n{}_{k}$
and $p_{i}(x)\leq p_{j}(x)$ whenever $x\in E_{i}\cap E_{j}$ and
$i\leq j$. Finally, suppose that $n_{i}-\text{rank }(q)\geq\left\lfloor \frac{d}{2}\right\rfloor $
for every $i.$ Then the following hold:

\noindent If $Y\subset X$ is closed, $q\restriction_{Y}$ is trivial
and $\forall y\in Y,$ 
\[
q(y)\leq\underset{\left\{ i|y\in E_{i}\right\} }{\bigwedge}p_{i}(y),
\]
 then $q\restriction_{Y}$ can be extended to a projection $\widetilde{q}$
on $X$ which is also trivial and satisfies,
\end{cor}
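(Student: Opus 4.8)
The plan is to reduce the statement to iterated applications of Theorem 2.4 along a filtration of $X$ by the closed cover, the conclusion being that the extension $\widetilde{q}$ is trivial and satisfies $\widetilde{q}(x)\leq\bigwedge_{\{i\mid x\in E_{i}\}}p_{i}(x)$ for every $x\in X$. Write $r=\text{rank}(q)$ and let $e_{r}\in M_{n}(C(X))$ denote the constant coordinate projection of rank $r$, so that $\epsilon_{e_{r}}\cong\theta^{r}$. Since $q\restriction_{Y}$ is trivial, $\epsilon_{q\restriction_{Y}}\cong\theta^{r}\cong\epsilon_{e_{r}\restriction_{Y}}$, so the Serre--Swan correspondence yields a partial isometry $w_{0}\in M_{n}(C(Y))$ with $w_{0}^{\ast}w_{0}=e_{r}\restriction_{Y}$ and $w_{0}w_{0}^{\ast}=q\restriction_{Y}$. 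The problem then becomes: extend $w_{0}$ to $s\in M_{n}(C(X))$ with $s^{\ast}s=e_{r}$ and $ss^{\ast}=:\widetilde{q}$ a projection obeying the subordination above; for then $\widetilde{q}\sim e_{r}$ forces $\widetilde{q}$ trivial, and $\widetilde{q}\restriction_{Y}=w_{0}w_{0}^{\ast}=q\restriction_{Y}$.

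I would construct $s$ by induction along $Y_{0}=Y$ and $Y_{j}=Y\cup E_{1}\cup\cdots\cup E_{j}$, noting $Y_{k}=X$ since the $E_{i}$ cover $X$ and each $Y_{j}$ is closed. Suppose $s_{j-1}\in M_{n}(C(Y_{j-1}))$ has been built with $s_{j-1}^{\ast}s_{j-1}=e_{r}\restriction_{Y_{j-1}}$, with $s_{j-1}\restriction_{Y}=w_{0}$, and with $s_{j-1}s_{j-1}^{\ast}(x)\leq\bigwedge_{\{i\leq j-1\mid x\in E_{i}\}}p_{i}(x)$. To pass to $Y_{j}$, apply Theorem 2.4 on the closed space $E_{j}$ (for which $\dim E_{j}\leq\dim X=d$) with closed subset $E_{j}\cap Y_{j-1}$, source projection $e_{r}\restriction_{E_{j}}$, target projection $p_{j}$, and initial datum $s_{0}:=s_{j-1}\restriction_{E_{j}\cap Y_{j-1}}$. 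The rank hypothesis of Theorem 2.4 holds since $\text{rank}(e_{r})+\lfloor\dim E_{j}/2\rfloor\leq r+\lfloor d/2\rfloor\leq n_{j}=\text{rank}(p_{j})$, the last inequality being the hypothesis $n_{j}-\text{rank}(q)\geq\lfloor d/2\rfloor$. This produces $\widehat{s}_{j}\in M_{n}(C(E_{j}))$ extending $s_{0}$ with $\widehat{s}_{j}^{\ast}\widehat{s}_{j}=e_{r}\restriction_{E_{j}}$ and $\widehat{s}_{j}\widehat{s}_{j}^{\ast}\leq p_{j}$; since $\widehat{s}_{j}$ and $s_{j-1}$ agree on the closed overlap $E_{j}\cap Y_{j-1}$, the pasting lemma glues them to a continuous $s_{j}\in M_{n}(C(Y_{j}))$.

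Two verifications drive the induction. First, to invoke Theorem 2.4 I must know $s_{0}s_{0}^{\ast}\leq p_{j}\restriction_{E_{j}\cap Y_{j-1}}$: for $x\in E_{j}\cap Y_{j-1}$ either $x\in Y$, where the hypothesis $q(x)\leq\bigwedge_{\{i\mid x\in E_{i}\}}p_{i}(x)\leq p_{j}(x)$ applies, or $x\in E_{i}$ for some $i\leq j-1$, where the inductive bound gives $s_{j-1}s_{j-1}^{\ast}(x)\leq p_{i}(x)\leq p_{j}(x)$ by the nesting $p_{i}\leq p_{j}$. Second, with $s_{k}$ built on $X$, set $\widetilde{q}=s_{k}s_{k}^{\ast}$ and check subordination pointwise: for $x\in X$ let $i_{0}=\min\{i\mid x\in E_{i}\}$; the value $s_{k}(x)$ was fixed at the first stage containing $x$, so either $x\in Y$ with $\widetilde{q}(x)=q(x)\leq\bigwedge_{\{i\mid x\in E_{i}\}}p_{i}(x)$, or $x\in E_{i_{0}}\setminus Y_{i_{0}-1}$ with $\widetilde{q}(x)\leq p_{i_{0}}(x)$ from the extension step. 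In both cases the nesting collapses the meet to $p_{i_{0}}(x)$, giving $\widetilde{q}(x)\leq\bigwedge_{\{i\mid x\in E_{i}\}}p_{i}(x)$.

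The main obstacle is organizational rather than a single sharp estimate: one must filter $X$ so that each newly adjoined $E_{j}$ carries the smallest-index constraint $p_{j}$ on its genuinely new points and so that the partial isometry already defined on the overlap automatically respects $p_{j}$. The hypothesis $p_{i}\leq p_{j}$ for $i\leq j$ is precisely what makes both work, reducing $\bigwedge_{\{i\mid x\in E_{i}\}}p_{i}$ to a single projection and guaranteeing compatibility of the successive extensions; the uniform gap $n_{i}-\text{rank}(q)\geq\lfloor d/2\rfloor$ then feeds Theorem 2.4 at every stage.
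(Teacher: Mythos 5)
Your proposal is correct, and it takes essentially the approach behind the paper's treatment of this statement: the paper gives no proof here, quoting the result from \cite[Corollary 2.7 (ii)]{tm3}, whose argument is exactly your induction along the filtration $Y_{j}=Y\cup E_{1}\cup\cdots\cup E_{j}$, applying Phillips's extension result on each $E_{j}$ with target $p_{j}$ and using the nesting $p_{i}\leq p_{j}$ both to verify the compatibility $s_{0}s_{0}^{\ast}\leq p_{j}$ on the overlap and to collapse the meet $\bigwedge_{\{i\mid x\in E_{i}\}}p_{i}(x)$ to the minimal-index projection. Note only that the extension theorem you invoke is Theorem 2.5 in this paper's numbering (not 2.4), and that your use of $\dim E_{j}\leq\dim X$ is justified since covering dimension is monotone on closed subsets of normal spaces.
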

\noindent 
\[
\widetilde{q}(x)\text{\ensuremath{\leq}}\underset{\left\{ i|x\in E_{i}\right\} }{\bigwedge}p_{i}(x),\forall x\in X.
\]

\noindent The following probably is a well known result, but we could
not find a direct reference and thus we include a short proof by applying
Theorem 2.5. 

\begin{cor}
\noindent Let $X$ be a compact Hausdorff space of dimension $d<\infty$,
and let $Y\subset X$ be closed. Then any trivial projection $r\in M_{n}(C(Y))$
extends to a trivial projection on $X$, provided that $\text{rank }(r)\leq n-\left\lfloor \frac{d}{2}\right\rfloor .$\end{cor}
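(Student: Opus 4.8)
The plan is to reduce the statement to a single application of Theorem 2.5 by taking $p$ and $q$ to be suitable \emph{constant} projections on $X$. Write $k=\text{rank}(r)$, let $q\in M_{n}(C(X))$ be the constant diagonal projection of rank $k$ (that is, $q=1_{k}\oplus 0_{n-k}$), and let $p=1_{n}$ be the identity. Since $q$ has constant fibre $\mathbb{C}^{k}$, its associated bundle $\epsilon_{q}$ is the product bundle $\theta^{k}$, so $q$ is a trivial projection on $X$; this constant projection will serve as the template for the extension we seek.

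First I would check the rank hypothesis of Theorem 2.5. For every $x\in X$ we have $\text{rank}(q(x))+\lfloor d/2\rfloor = k+\lfloor d/2\rfloor \leq n = \text{rank}(p(x))$, where the inequality is precisely the standing assumption $\text{rank}(r)\leq n-\lfloor d/2\rfloor$. Thus the pair $(p,q)$ satisfies the rank gap required to invoke the theorem.

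Next I would manufacture the partial isometry $s_{0}$ on $Y$ out of the triviality of $r$. As $r$ is a trivial projection of rank $k$ on the (compact Hausdorff) closed set $Y$, its bundle $\epsilon_{r}$ is isomorphic to $\theta^{k}\cong \epsilon_{q\restriction_{Y}}$; by the Serre--Swan correspondence this yields $r\sim q\restriction_{Y}$, so there is $s_{0}\in M_{n}(C(Y))$ with $s_{0}^{\ast}s_{0}=q\restriction_{Y}$ and $s_{0}s_{0}^{\ast}=r\leq p\restriction_{Y}$. Theorem 2.5 now applies and produces $s\in M_{n}(C(X))$ with $s^{\ast}s=q$, $ss^{\ast}\leq p$, and $s\restriction_{Y}=s_{0}$.

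Finally I would set $\widetilde{r}=ss^{\ast}$ and verify it has the three required properties. Because $s^{\ast}s=q$ is a projection, $s$ is a partial isometry, whence $\widetilde{r}=ss^{\ast}$ is again a projection; moreover $\widetilde{r}\sim q$ via $s$, and since $q$ is trivial on $X$ the equivalence forces $\widetilde{r}$ to be trivial as well (using the observation recorded earlier that Murray--von Neumann equivalent projections are simultaneously trivial or not). Restricting to $Y$ gives $\widetilde{r}\restriction_{Y}=s_{0}s_{0}^{\ast}=r$, so $\widetilde{r}$ is the desired trivial extension. There is no serious obstacle here: essentially everything is bookkeeping with ranks and the Serre--Swan dictionary, and the one step I would write out carefully is the elementary $C^{*}$-algebraic fact that $s^{\ast}s$ being a projection forces $ss^{\ast}$ to be a projection (via $s=s\,s^{\ast}s$, so that $(ss^{\ast})^{2}=s(s^{\ast}s)s^{\ast}=ss^{\ast}$).
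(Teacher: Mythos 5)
Your proposal is correct and follows essentially the same route as the paper's own proof: take $q$ to be a constant rank-$k$ projection (hence trivial) and $p=1_{n}$, use the Serre--Swan correspondence and the triviality of $r$ to produce $s_{0}$ with $s_{0}^{\ast}s_{0}=q\restriction_{Y}$ and $s_{0}s_{0}^{\ast}=r$, extend via Theorem 2.5, and set $\widetilde{r}=ss^{\ast}$. Your added verifications (that $ss^{\ast}$ is a projection, and that Murray--von Neumann equivalence preserves triviality) are details the paper leaves implicit, having recorded the latter in Section 2.
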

\begin{proof}

\noindent Suppose that the rank of $r$ is $k$ and take $q$ to be
any projection in $M_{n}$ of rank $k.$ Then, $q$ represents a trivial
projection in $M_{n}(C(X))$ and since $r$ is trivial on $Y,$ from
Serre-Swan correspondence there exists $s_{0}\in M_{n}(C(Y))$ such
that $s_{0}^{\text{\textasteriskcentered}}s_{0}=q\restriction_{Y}$
and $r=s_{0}s_{0}^{\text{\textasteriskcentered}}.$ Let $p$ be the
unit of $M_{n}(C(X))$. Then, as $s_{0}s_{0}^{\text{*}}\leq p\restriction_{Y}$
and $\text{rank}(q)\leq\text{rank}(p)-\left\lfloor \frac{d}{2}\right\rfloor $,
by Theorem 2.5 $s_{0}$ extends to $s\in M_{n}(C(X))$ such that,
$s^{\text{\textasteriskcentered}}s=q$. Thus, $\tilde{r}=ss^{\text{\textasteriskcentered}}$
is a trivial projection on $X$ with $\tilde{r}\restriction_{Y}=s_{0}s_{0}^{\text{\textasteriskcentered}}=r.$\end{proof}

\begin{rem*}
\noindent In the hypothesis of 2.6 its assumed that $q$ is defined
on $X.$ However, 2.7 tell us that the conclusion of 2.6 is valid
even when $q$ is defined only on $Y$. In section 3, we will use
this observation without further mention. 
\end{rem*}

\noindent The final result of this section will not be required in
proving Theorem 3.5 but we will need this to prove Lemma 4.2, which
is essential to our proof of the main Theorem of section 4. 

\noindent For $a\in M_{n}(C(X))$, let $spec(a)=\{\lambda\in\mathbb{C}:a-\lambda1_{n}\text{ is not invertible}\}$.
For $a\in M_{n}(C(X))_{+}$, let $\Gamma_{a}:X\to\mathbb{R}^{n}$
be the map given by 
\[
\Gamma_{a}(x)=(\lambda_{1}(x),\,\lambda_{2}(x)........,\,\lambda_{n}(x)),
\]
where $\lambda_{1}(x)\leq\lambda_{2}(x)\leq.....\leq\lambda_{n}(x)$
are the eigenvalues of $a(x)$. Proposition \ref{final2} is a straightforward
consequence of the continuity of $\Gamma_{a}$ on $X$.

\begin{prop}\label{final2}
\noindent Let $a\in M_{n}(C(X))_{+}$ and $\eta\geq0$. The map $x\mapsto\text{rank}[\chi_{(\eta,||a||]}(a(x))]$
where $\chi_{(\eta,||a||]}$ denotes the characteristic map on $(\eta,||a||]$,
is lower semi-continuous.\end{prop}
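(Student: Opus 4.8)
The plan is to reduce the rank of the spectral projection to a count of eigenvalues and then read off lower semi-continuity from the continuity of $\Gamma_a$ one coordinate at a time. First I would note that for every $x$ the matrix $a(x)$ is positive with $\|a(x)\|\leq\|a\|$, so each of its eigenvalues lies in $[0,\|a\|]$. Hence the interval $(\eta,\|a\|]$ catches exactly those eigenvalues that are strictly greater than $\eta$, and the spectral projection $\chi_{(\eta,\|a\|]}(a(x))$ is precisely the orthogonal projection onto the span of the corresponding eigenvectors. Its rank therefore equals the number of eigenvalues of $a(x)$, counted with multiplicity, that exceed $\eta$; in the notation of $\Gamma_a$, setting $f(x):=\#\{\,j:\lambda_j(x)>\eta\,\}$ we have $\operatorname{rank}[\chi_{(\eta,\|a\|]}(a(x))]=f(x)$.

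Next I would carry out the semi-continuity argument pointwise. Because the eigenvalues are listed in increasing order $\lambda_1(x)\leq\cdots\leq\lambda_n(x)$, the ones exceeding $\eta$ are always the top ones: if $f(x_0)=m$ at some fixed $x_0$, then (for $m>0$) the smallest eigenvalue still above $\eta$ is $\lambda_{n-m+1}(x_0)>\eta$. The coordinate function $x\mapsto\lambda_{n-m+1}(x)$ is continuous, being a component of the continuous map $\Gamma_a$, so there is an open neighborhood $U$ of $x_0$ on which $\lambda_{n-m+1}(x)>\eta$. For $x\in U$ the monotonicity $\lambda_{n-m+1}(x)\leq\lambda_{n-m+2}(x)\leq\cdots\leq\lambda_n(x)$ forces all of $\lambda_{n-m+1}(x),\dots,\lambda_n(x)$ to exceed $\eta$, whence $f(x)\geq m=f(x_0)$. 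The case $m=0$ is vacuous since $f\geq 0$. As $f$ is integer valued, this local lower bound $f\geq f(x_0)$ on a neighborhood of each $x_0$ is exactly lower semi-continuity, and since $x_0$ was arbitrary we conclude that $f$ is lower semi-continuous on $X$.

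The only genuine input is the continuity of $\Gamma_a$, which is the assertion I would invoke as given: the increasingly ordered eigenvalues of a norm-continuous family of self-adjoint matrices vary continuously in $x$ (this can be justified, if desired, from the continuous dependence of the roots of $\det(a(x)-\lambda 1_n)$ on $x$, together with the fact that sorting preserves continuity). Granting that, the argument above is purely a bookkeeping exercise in the ordering of eigenvalues, so I do not anticipate a serious obstacle; the one point that requires care is keeping the direction of the sort straight, so that the threshold eigenvalue whose continuity is used is correctly identified as $\lambda_{n-m+1}$ rather than $\lambda_m$.
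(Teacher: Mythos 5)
Your proposal is correct and follows essentially the same argument as the paper: both identify the rank of $\chi_{(\eta,\|a\|]}(a(x))$ with the number of ordered eigenvalues exceeding $\eta$, use the continuity of $\Gamma_a$ (the paper via $\epsilon=\lambda_{n-m+1}(x)-\eta$ and the full map, you via the single coordinate $\lambda_{n-m+1}$, a cosmetic difference) to obtain a neighborhood where the top $m$ eigenvalues stay above $\eta$, and conclude $f(y)\geq f(x)$ nearby. Your explicit handling of the $m=0$ case is a minor completeness point the paper leaves implicit.
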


\begin{proof}
\noindent It is clear that $\Gamma_{a}$ is continuous on $X$ for
any $a\in M_{n}(C(X))_{+}$. Let $x\in X$ and suppose $\text{rank}[\chi_{(\eta,||a||]}(a(x))]=m$.
Then, there are exactly $m$ (with possible repetitions) eigenvalues
of $a(x),$ which are grater than $\eta.$ Moreover, as $\lambda_{i}(x)$
are in increasing order, $\lambda_{n-m+1}(x),\lambda_{n-m+2}(x),....\lambda_{n}(x)$
are exactly the eigenvalues of $a(x)$ which are greater than $\eta$.
Set $\epsilon=\lambda_{n-m+1}(x)-\eta>0$ and by continuity of $\Gamma_{a}$
choose a neighborhood $U_{x}$ of $x$ such that $||\Gamma_{a}(x)-\Gamma_{a}(y)||<\epsilon,\forall y\in U_x$. 

\noindent Then for all $1\leq i\leq n$, 
\[
|\lambda_{i}(x)-\lambda_{i}(y)|<\epsilon
\]
and therefore by the choice of $\epsilon$, 
\[
\forall y\in U_{x},\forall n-m+1\leq i\leq n,\,\lambda_{i}(y)>\eta.
\]
Thus, for each $y\in U_{x}$, 
\[
\text{rank}[\chi_{(\eta,||a||]}(a(y))]\geq m=\text{rank\ensuremath{[\chi_{(\eta,||a||]}(a(x))]}.}
\]

\end{proof}

\section{\noindent Proof of the main result}

\noindent Our first aim is to prove Lemma 3.2, which is the main technical
result. Next we prove Proposition 3.4 which extends \cite[Chap. 8, Theorem 7.2]{Huse}
to compact Hausdorff spaces. Theorem 3.5 follows immediately by combining
the two Lemmas. 

\noindent The following is a direct consequence of Theorem 2.3. 
\begin{lem}
\noindent Let $X$ be a compact Hausdorff space and \mbox{$a\in M_{n}(C(X))_{+}$}.
Then, there exists a continuous path $t\mapsto a_{t}$ in $M_{n}(C(X))_{+}$
connecting $a_{0}=a$ to $a_1$, where $a_1$ is well supported in the sense of Definition 2.2 and has the same rank values as that of $a$. The path is such that $\text{rank}(a_{t}(x))=\text{rank}(a(x)),\forall t\in (0,1),\forall x\in X$. \end{lem}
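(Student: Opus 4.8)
The plan is to use Theorem 2.3 to produce a well supported element close to $a$ in norm, and then to interpolate linearly between $a$ and this element while tracking ranks. First I would apply Theorem 2.3: for a suitably small $\delta>0$ (to be constrained below) it yields a well supported $b\in M_n(C(X))_+$ with $b\leq a$, $\|a-b\|<\delta$, and with the range of $r_b$ equal to the range of $r_a$. Set $a_1=b$; this already gives the well supported endpoint with the same rank values as $a$. The natural candidate path is the straight-line homotopy $a_t=(1-t)a+tb$, which lies in $M_n(C(X))_+$ for $t\in[0,1]$ since the positive elements form a convex cone. The content of the lemma is then entirely the rank statement, namely that $\mathrm{rank}(a_t(x))=\mathrm{rank}(a(x))$ for every $x\in X$ and every $t\in(0,1)$.

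To control the ranks I would argue pointwise and exploit $b\leq a$. Fix $x\in X$ and let $m=\mathrm{rank}(a(x))$. Since $b(x)\leq a(x)$, we have $\ker a(x)\subseteq\ker b(x)$, so the range of $a(x)$ contains the range of $b(x)$; consequently for $t\in[0,1)$ the operator $a_t(x)=(1-t)a(x)+tb(x)$ is a convex combination whose range is contained in $\mathrm{ran}\,a(x)$, giving $\mathrm{rank}(a_t(x))\leq m$ for all $t\in[0,1]$. For the reverse inequality on the open interval I would restrict attention to the range of $a(x)$, on which $a(x)$ is strictly positive definite (as an operator on the $m$-dimensional space $V_x=\mathrm{ran}\,a(x)$). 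For $t\in[0,1)$ the factor $(1-t)>0$ and $b(x)\geq0$, so on $V_x$ we have $a_t(x)\restriction_{V_x}\geq(1-t)\,a(x)\restriction_{V_x}>0$ as a form; hence $a_t(x)\restriction_{V_x}$ is strictly positive definite on the $m$-dimensional space $V_x$, forcing $\mathrm{rank}(a_t(x))\geq m$. Combining the two bounds gives $\mathrm{rank}(a_t(x))=m$ for all $t\in[0,1)$, and in particular for $t\in(0,1)$ as claimed.

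The main obstacle to watch is the precise behaviour at the two endpoints and the interplay between $b\leq a$ and the rank equality. At $t=0$ we have $a_0=a$, which is fine; the subtle point is that the argument above in fact gives equality on all of $[0,1)$, so the rank is constant up to but not including $t=1$, which is exactly what the lemma asserts for $(0,1)$. At $t=1$ the rank of $a_1=b$ need not equal $m$ pointwise, only the \emph{range} of the rank function is preserved, so I must be careful not to claim rank constancy at the closed endpoint. The one place where a genuine inequality rather than an equality could fail is the lower bound $\mathrm{rank}(a_t(x))\geq m$: this relies essentially on $(1-t)>0$ together with positivity of $b(x)$, so no smallness of $\delta$ is actually needed for the rank statement — the hypothesis $b\leq a$ does all the work. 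I would therefore not need to invoke the norm bound $\|a-b\|<\delta$ at all for the rank claim; it is the inequality $b\leq a$ furnished by Theorem 2.3 that is doing the lifting, and I would make sure to state clearly that convexity of $(M_n)_+$ gives continuity and positivity of the path while the comparison $b\leq a$ gives the rank invariance.
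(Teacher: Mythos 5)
Your proposal is correct and follows essentially the same route as the paper: apply Theorem 2.3 to get a well supported $b\leq a$ with the same rank values, take the linear path $a_{t}=(1-t)a+tb$, and deduce rank constancy on $(0,1)$ from the sandwich $(1-t)a\leq a_{t}\leq a$, which is exactly your kernel/range argument in compressed form. Your closing observations --- that the norm bound $\|a-b\|<\delta$ is never used and that only the comparison $b\leq a$ matters --- are accurate and consistent with the paper's proof.
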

\begin{proof}
\noindent Applying Theorem 2.3, choose a well supported positive element
$b\leq a$ such that $b$ has the same rank values as that of $a$,
let $a_{t}=(1-t)a+tb$. We only have to verify that $\text{rank }(a_{t}(x))=\text{rank }(a(x))$
for every $t\in(0,1)$ and $x\in X.$ But this is immediate.

\noindent Since $b\leq a$, if $0<t<1,$ 
\[
(1-t)a\leq a_{t}\leq(1-t)a+ta=a.
\]
Therefore, $\text{rank}(a_{t}(x))=\text{rank}(a(x))$, $\forall t\in(0,1),\forall x\in X$. \end{proof}
\begin{lem}
\noindent Let $X$ be a compact Hausdorff space with dim $X<\infty$.
Suppose $n,k,l\in\mathbb{N}$ are such that $k\leq n$ and $k-l\geq\left\lfloor \frac{dim\, X}{2}\right\rfloor $
and let $a\in C(X,S(n,k,l))$. Then, there is a continuous path $h$
: $[0,1]\to C(X,S(n,k,l))$ such that $h(0)=a$ and $h(1)$ is a trivial
projection of rank $l$.\end{lem}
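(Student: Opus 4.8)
The plan is to exploit the tools assembled in Section 2---especially Lemma 3.1, the Serre--Swan correspondence, and the extension results 2.5--2.7---to contract $a$ onto a trivial rank-$l$ projection in two conceptual stages: first deform $a$ to a well supported element, then ``peel off'' the rank down to $l$ while simultaneously trivializing the support. By Lemma 3.1 I may replace $a$ by a homotopic well supported element $a_1$ with the same rank values, so without loss of generality I assume from the outset that $a$ is well supported, with strata $E_1,\dots,E_L$, values $n_1<\cdots<n_L$ (all lying in $[l,k]$), and coherent support projections $p_i\in M_n(C(\overline{E}_i))$ satisfying $p_i\le p_j$ on $\overline{E}_i\cap\overline{E}_j$ for $i\le j$.

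The key idea is that the rank gap $k-l\ge\lfloor\tfrac{\dim X}{2}\rfloor$ is exactly what is needed to find, \emph{inside} the support of $a$ everywhere, a \emph{trivial} rank-$l$ subprojection. Concretely, I would seek a trivial projection $q\in M_n(C(X))$ of rank $l$ with $q(x)\le p_i(x)$ for every $x\in E_i$ and every $i$; equivalently $q(x)\le\bigwedge_{\{i\,:\,x\in E_i\}}p_i(x)$ pointwise. Such a $q$ is produced by Corollary 2.6 (in the form noted in the Remark following 2.7, allowing $q$ to be built from the ground up): the smallest relevant rank is $n_1\ge l$, the rank condition $n_i-l\ge n_1-l\ge 0$ together with $k-l\ge\lfloor\tfrac{\dim X}{2}\rfloor$ supplies the hypothesis $n_i-\operatorname{rank}(q)\ge\lfloor\tfrac{d}{2}\rfloor$ needed to invoke the extension machinery, and one starts the extension from any fixed rank-$l$ projection on a point or on $\overline{E}_1$. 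Once $q$ is in hand, the homotopy is built in two transparent moves. First, I contract $a$ within $S(n,k,l)$ to its support projection $p$ along the constant-rank path $t\mapsto (1-t)a+t\,p$ where $p(x)=\lim_r a(x)^{1/r}$ is the (fiberwise) support; this keeps the rank of every fiber fixed, hence stays in $S(n,k,l)$, ending at the projection-valued map $p$.

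The heart of the argument is the second move: deform the projection $p$ down to the trivial projection $q$ of rank $l$ while never leaving $S(n,k,l)$. Because $q\le p$ pointwise, $p-q$ is a projection-valued continuous field of rank $\operatorname{rank}(p(x))-l$, and I can use a standard Murray--von Neumann rotation/geodesic homotopy in the relative commutant to shrink $p$ to $q$ through projections $p_t$ with $q\le p_t\le p$; such a path satisfies $l\le\operatorname{rank}(p_t(x))\le k$ throughout, so it lives in $S(n,k,l)$ and terminates at the trivial rank-$l$ projection $q=h(1)$. Concatenating the two moves (and the preliminary Lemma 3.1 deformation) gives the required $h$ with $h(0)=a$ and $h(1)=q$.

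The main obstacle I anticipate is the \emph{global coherence} of the trivial subprojection $q$: producing $q$ on each closed stratum $\overline{E}_i$ compatibly and then gluing across the overlaps $\overline{E}_i\cap\overline{E}_j$ so that a single continuous $q\in M_n(C(X))$ results, all while respecting $q\le\bigwedge p_i$. This is precisely the difficulty that Corollary 2.6 is designed to overcome, so the real work is in verifying that its hypotheses are met---in particular that the dimension-halving rank bound $n_i-l\ge\lfloor\tfrac{d}{2}\rfloor$ holds (which is where $k-l\ge\lfloor\tfrac{\dim X}{2}\rfloor$, combined with $n_i\ge l$, must be used carefully, possibly by first applying part 1 of Theorem 2.1 to split off a trivial summand from each $p_i$ and reduce the effective fiber dimensions). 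A secondary technical point is ensuring the second homotopy $p\rightsquigarrow q$ is genuinely continuous in $x$ as well as $t$; this follows from local triviality of $\epsilon_p$ and $\epsilon_q$ and the functional-calculus construction of the rotation path, but it should be checked rather than asserted.
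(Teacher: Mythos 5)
There is a genuine gap, and it sits at the heart of your plan: the trivial rank-$l$ projection $q$ with $q(x)\leq\bigwedge_{\{i\,:\,x\in E_{i}\}}p_{i}(x)$ that you want to retract onto need not exist. Your verification of the hypotheses of Corollary 2.6 is where this surfaces: 2.6 requires $n_{i}-\operatorname{rank}(q)\geq\left\lfloor \frac{d}{2}\right\rfloor$ for \emph{every} $i$, i.e. $n_{1}\geq l+\left\lfloor \frac{d}{2}\right\rfloor$, whereas the lemma only guarantees $n_{1}\geq l$; since $n_{i}\leq k$, the given inequality $k-l\geq\left\lfloor \frac{d}{2}\right\rfloor$ bounds $n_{i}-l$ from \emph{above} by $k-l$, not from below, so it cannot supply the needed hypothesis. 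Worse, the object itself is obstructed: take $X=S^{2}$, $n\geq2$, $k=2$, $l=1$, and $a$ the Bott projection (a rank-one projection whose line bundle is nontrivial). The hypotheses of the lemma hold ($k-l=1=\left\lfloor \frac{d}{2}\right\rfloor$), yet any trivial rank-one projection $q\leq a$ would equal $a$, contradicting nontriviality. So no homotopy whose endpoint is forced to lie under the support of $a$ can prove this lemma. The paper's proof deliberately avoids this: it builds a trivial rank-$l$ projection $R$ that is \emph{not} below the support on the low-rank strata, controlling only $\operatorname{rank}(a+R)(x)\leq k$ pointwise, and then uses the single straight-line path $t\mapsto(1-t)a+tR$, along which the fiberwise rank may \emph{rise} above $\operatorname{rank}\,a(x)$ (this is exactly what the gap $k-l$ buys) before collapsing to $l$ at $t=1$. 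The inductive use of 2.6/2.7 there runs only over the strata with $n_{j}>\left\lfloor \frac{d}{2}\right\rfloor$ and produces a projection of rank $n_{L}-\left\lfloor \frac{d}{2}\right\rfloor$ (not rank $l$), precisely so that the rank hypothesis of 2.6 is satisfiable; the low strata are handled by a counting estimate, not by sub-projection.

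Two auxiliary steps of your plan also fail as stated. First, the "contraction onto the support projection" $t\mapsto(1-t)a+tp$ with $p(x)=\lim_{r}a(x)^{1/r}$ is not a path in $M_{n}(C(X))$ when $a$ has non-constant rank: the global support projection is discontinuous across stratum boundaries (well-supportedness gives continuity of each $p_{i}$ only on $\overline{E}_{i}$, and the $p_{i}$ have different ranks, so they do not glue to a continuous $p$). Second, a norm-continuous path of projections has locally constant rank, so there is no projection-valued path $p_{t}$ with $q\leq p_{t}\leq p$ whose rank strictly decreases; one must leave the projections, e.g. via the linear path $(1-t)p+tq$, which keeps rank equal to $\operatorname{rank}(p(x))$ for $t<1$ and drops to $l$ at $t=1$. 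With that repair, your strategy does become a correct and even shorter argument --- a single linear path from $a$ to $q$ --- but only in the special case $n_{1}\geq l+\left\lfloor \frac{d}{2}\right\rfloor$, where the coherent trivial $q$ exists; in the general case covered by the lemma the approach is unsalvageable for the reason above.
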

\begin{proof}
\noindent Let $X,n,k,l$ and $a$ be as given in the hypothesis. By
Lemma 3.1 we can clearly assume that $a$ is well supported. 

\noindent Let the rank values of $a$ be $n_{1}<n_{2}<.....<n_{L}$
and let $E_{1},E_{2}\ldots,E_{L}$ and $p_{1},p_{2},\ldots,p_{L}$
be as in Definition 2.2. For convenience we will write $F_{i}=\overline{E}_{i}$
and $d=dim\, X$. 

We first consider the case $n_{L}\leq\lfloor\frac{d}{2}\rfloor$.
Then, choose $p\in M_{n}(C(X))$ to be any trivial projection of rank
$l$ and let 
\[
h(t)=(1-t)a+tp.
\]
Now for each $t\in[0,1],x\in X,$
\begin{eqnarray*}
\mbox{rank}\,[h(t)(x)] & \leq & \mbox{rank}\, a(t)(x)+\mbox{rank}\, p\\
 & \leq & n_{L}+l\\
 & \leq & \lfloor\frac{d}{2}\rfloor+l\\
 & \leq & k,
\end{eqnarray*}

and clearly $\mbox{rank}\,[h(t)(x)]\geq l$. Thus, we get the required
path.

Now let us assume $n_{L}>\lfloor\frac{d}{2}\rfloor.$ 

Fix $r$ such that $n_{r}>\lfloor\frac{d}{2}\rfloor$and $n_{r-1}\leq\lfloor\frac{d}{2}\rfloor,$
where we allow the possibility $r=1$ and set $n_{0}=0,\, F_{0}=\emptyset$.

In what proceeds, we will construct a trivial projection $R\in M_{n}(C(X))$
of rank $l$ such that, 
\begin{eqnarray*}
\mbox{rank}\,(R+a)(x) & \leq & k,\forall x\in X.
\end{eqnarray*}

Once we have such $R,$ we define $h:[0,1]\to M_{n}(C(X))$ by,

\noindent 
\[
h(t)=(1-t)a+tR,\forall t\in[0,1].
\]

Then its immediate that this path satisfies the said rank constrains
(i.e. remains in side $C(S(n,k,l))$. 

We focus on constructing $R.$ To this end, we first define a trivial
projection $q_{L}\in M_{n}(C(\underset{r\leq j\leq L}{\bigcup}F_{j}))$
such that 

\begin{eqnarray*}
\mbox{rank}\, q_{L} & = & n_{L}-\lfloor\frac{d}{2}\rfloor
\end{eqnarray*}
and
\[
\mbox{rank }(a+q_{L})(x)\leq n_{L},\forall x\in\underset{r\leq j\leq L}{\bigcup}F_{j}.
\]

We follow an inductive argument to define $q_{L}$.

Since $F_{r}$ is compact Hausdorff with $dim\: F_{r}\leq d$ and
$p_{r}\in M_{n}(C(F_{r}))$ is a projection of rank $n_{r}>\lfloor\frac{d}{2}\rfloor$,
using Theorem 2 .1 (1) and Serre-Swan correspondence we find a trivial
projection $q_{r}\in M_{n}(C(F_{r}))$ such that,
\begin{eqnarray*}
\mbox{rank}\, q_{r} & = & n_{r}-\lfloor\frac{d}{2}\rfloor\text{ }
\end{eqnarray*}
and $q_{r}\leq p_{r}.$

By the requirements for well supportedness of $a$, each $p_{i}\in M_{n}(C(F_{i}))$
is of constant rank $n_{i}$ and whenever $r\leq i\leq j$ with $F_{i}\cap F_{j}\neq\emptyset$,
\begin{equation}
p_{i}(x)\leq p_{j}(x),\,\forall x\in F_{i}\cap F_{j}.\label{eq:3.1}
\end{equation}

Also for all $j\geq r,$
\begin{eqnarray*}
\mbox{rank}\, p_{j}-\mbox{rank}\, q_{r} & \geq & n_{r}-\mbox{rank}\, q_{1}\\
 & \geq & \lfloor\frac{d}{2}\rfloor.
\end{eqnarray*}

Hence, we apply 2.6 with $X=\underset{r\leq j\leq L}{\bigcup}F_{j},$
$Y=F_{1}$, $q=q_{r}$ (by the remark following 2.7, $q$ in 2.6 need
not be defined on $X$ a priori) to extend $q_{r}$ to a trivial projection
in $M_{n}(C(\underset{r\leq j\leq L}{\bigcup}F_{j})))$ - which is
again called $q_{r}$, such that whenever $r\leq j,$
\begin{eqnarray*}
q_{r}(x) & \leq & p_{j}(x),\forall x\in F_{j}.
\end{eqnarray*}

Then, for each $r\leq j\leq L,$ 
\[
\mbox{rank}\,(q_{r}+a)(x)\leq n_{j},\forall x\in F_{j}.
\]

If $r=L$ then we are done (defining $q_{L}$).

Thus, let us assume $r<L.$

Suppose that for some $r\leq t<L$ we have defined a trivial projection
$q_{t}\in M_{n}(C(\underset{r\leq j\leq L}{\bigcup}F_{j}))$ such
that the following hold,

\noindent 
\begin{eqnarray}
\mbox{rank}\, q_{t} & = & n_{t}-\lfloor\frac{d}{2}\rfloor,\label{eq:3.2}\\
q_{t}(x) & \leq & p_{j}(x),\forall x\in F_{j},\forall t\leq j\leq L,\label{eq:3.3-1}\\
\mbox{rank}\,(q_{t}+p_{j}) & \leq & n_{t},\forall r\leq j\leq t.\label{eq:3.4-1}
\end{eqnarray}

\noindent Now whenever $t+1\leq j\leq L$, $(p_{j}-q_{t})\restriction_{F_{t+1}}\in M_{n}(C(F_{i+1}))$
is a projection constant rank $(n_{j}-n_{t})+\lfloor\frac{d}{2}\rfloor$. 

\noindent Thus, since $dim\, F_{t+1}\leq d$, by applying 2.1 we choose
a trivial projection $q_{t,t+1}\in M_{n}(C(F_{t+1}))$ such that,
\begin{eqnarray*}
\mbox{rank}\, q_{t,t+1} & = & n_{t+1}-n_{t}
\end{eqnarray*}

and $q_{t,t+1}\leq p_{t+1}-q_{t}.$

\noindent Moreover, by applying 2.6 with $X=\underset{t+1\leq j\leq L}{\bigcup}F_{j}$,
$Y=F_{t+1}$ and $q=q_{t,t+1}$ we extend $q_{t,t+1}$ to a trivial
projection in $M_{n}(C(\underset{t+1\leq j\leq L}{\bigcup}F_{j}))$
(which we again name $q_{t,t+1}$) such that whenever $j\geq t+1$,
\begin{equation}
q_{t,t+1}(x)\leq p_{j}(x)-q_{t}(x),\forall x\in F_{j}.\label{eq:3.5-1}
\end{equation}

\noindent Set $q_{t+1}=q_{t}+q_{t,t+1}$. 

\noindent Then, since $q_{t},\, q_{t,t+1}$ are orthogonal trivial
projections, $q_{t+1}$ is a trivial projection in $M_{n}(C(\underset{t+1\leq j\leq L}{\bigcup}F_{j})).$ 

\noindent Moreover,

by \ref{eq:3.2},
\begin{eqnarray*}
\mbox{rank}\, q_{t+1} & = & (n_{t}-\lfloor\frac{d}{2}\rfloor)+(n_{t+1}-n_{t})\\
 & = & n_{t+1}-\lfloor\frac{d}{2}\rfloor
\end{eqnarray*}

and whenever $j\geq t+1,$ $\forall x\in F_{j}$ (by \ref{eq:3.3-1}
and \ref{eq:3.5-1} ), 
\begin{eqnarray*}
q_{t+1}(x) & = & [q_{t}(x)+q_{t,t+1}(x)]\\
 & \leq & p_{j}(x)
\end{eqnarray*}
 and finally for each $r\leq j\leq t+1$ (by \ref{eq:3.4-1}),

\begin{eqnarray*}
\mbox{rank}\,(q_{t+1}+p_{j}) & \leq & \mbox{rank}\, q_{t,t+1}+\mbox{rank}\,(q_{t}+p_{j})\\
 & \leq & (n_{t+1}-n_{t})+n_{t}\\
 & = & n_{t+1}.
\end{eqnarray*}

By proceeding in this manner we construct a trivial projection $q_{L}\in M_{n}(C(\underset{r\leq j\leq L}{\bigcup}F_{j}))$
of rank $n_{L}-\lfloor\frac{d}{2}\rfloor$ such that, 
\begin{eqnarray*}
\mbox{rank}\,(q_{L}+p_{j})(x) & \leq & n_{L},\forall x\in F_{j},\forall r\leq j\leq L
\end{eqnarray*}

and

\[
q_{L}(x)\leq p_{L}(x),\forall x\in F_{L}.
\]

\noindent Choose $R_{1}\in M_{n}(C(X))$ to be any trivial projection
(of rank $n_{L}-\lfloor\frac{d}{2}\rfloor$) which extends $q_{L}$.
Note that such $R_{1}$ exists by Corollary 2.7. 

By the choice of $r,$ whenever $j<r,$ $\forall x\in F_{j},$
\begin{eqnarray*}
\mbox{rank}\,(R_{1}+a)(x) & \leq & (n_{L}-\lfloor\frac{d}{2}\rfloor)+\lfloor\frac{d}{2}\rfloor\\
 & \leq & n_{L}.
\end{eqnarray*}

Thus, since $R_{1}\restriction_{F_{j}}=q_{L}\restriction_{F_{j}}$
whenever $j\geq r$ we conclude that ,
\[
\mbox{rank}\,(R_{1}+a)(x)\leq n_{L},\forall x\in X.
\]

If $n_{L}=k,$ then 
\begin{eqnarray*}
\mbox{rank}\, R_{1} & = & k-\lfloor\frac{d}{2}\rfloor\\
 & \geq & l
\end{eqnarray*}

and we choose $R$ to be any trivial sub-projection of rank $l$ .

Hence we are left with the case $k>n_{L}$.

Then,

\begin{eqnarray*}
\mbox{rank}\,(1_{n}-R_{1}) & = & n-(n_{L}-\lfloor\frac{d}{2}\rfloor)\\
 & \geq & (k-n_{L})+\lfloor\frac{d}{2}\rfloor
\end{eqnarray*}

and we apply Theorem 2.1 (1) and Serre-Swan for one last time to choose
a trivial projection $R_{2}\in M_{n}(C(X))$ of rank $k-n_{L}$ with
$R_{2}\leq(1_{n}-R_{1}).$ 

Now $R_{1}+R_{2}$ is a trivial projection of rank $k-\lfloor\frac{d}{2}\rfloor$
and
\[
\mbox{rank}\,(R_{1}+R_{2}+a)(x)\leq k,\forall x\in X.
\]

To complete the proof we choose $R$ to be a trivial sub-projection
of $R_{1}+R_{2}$ of rank $l$.
\end{proof}
\noindent Recall the following theorem for locally trivial vector
bundles bundles over $CW$ -complexes.
\begin{thm}
\noindent \cite[Chp. 8, Theoreom 7.2]{Huse} Let $X$ be a CW-complex
and $n,l$ be non-negative integers. Then, the function that assigns
to each homotopy class $\left[f\right]:X\to G_{l}(\mathbb{C}^{n})$
the isomorphism class of the $k$-dimensional vector bundle $f^{*}(\gamma_{k}^{n})$
over $X$ is a bijection, if \mbox{$n\geq l+\left\lceil \frac{dim\, X}{2}\right\rceil $}. 
\end{thm}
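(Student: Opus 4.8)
The plan is to read this as the standard universality statement for the Grassmann model of $BU(l)$ in the stable range, and to prove it by obstruction theory once the connectivity of the frame manifold is known. Write $\gamma=\gamma_l^n$ for the canonical $l$-plane bundle over $G_l(\mathbb{C}^n)$, and recall that $\gamma$ is associated to the principal $U(l)$-bundle $V_l(\mathbb{C}^n)\to G_l(\mathbb{C}^n)$, where $V_l(\mathbb{C}^n)\cong U(n)/U(n-l)$ is the complex Stiefel manifold of unitary $l$-frames in $\mathbb{C}^n$. Since pullbacks along homotopic maps of bundles over a CW-complex are isomorphic, the assignment $[f]\mapsto[f^{*}\gamma]$ is well defined, so only bijectivity is at issue. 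Using the bijective passage between rank-$l$ vector bundles and their principal $U(l)$-frame bundles, I would reduce the claim to showing that $[f]\mapsto f^{*}V_l(\mathbb{C}^n)$ is a bijection from $[X,G_l(\mathbb{C}^n)]$ onto the set of isomorphism classes of principal $U(l)$-bundles over $X$.

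The decisive input is the connectivity of the fiber. From the sphere fibrations $S^{2(n-j)+1}\to V_j(\mathbb{C}^n)\to V_{j-1}(\mathbb{C}^n)$ for $j=1,\dots,l$ (forget the last frame vector; the fiber is the unit sphere in the orthogonal complement), together with $V_1(\mathbb{C}^n)=S^{2n-1}$, an induction along the long exact homotopy sequence shows that $V_l(\mathbb{C}^n)$ is $2(n-l)$-connected, the first nonvanishing group being $\pi_{2(n-l)+1}\cong\mathbb{Z}$ coming from the smallest fiber $S^{2(n-l)+1}$. The numerical hypothesis then translates exactly: $n\geq l+\lceil\frac{\dim X}{2}\rceil$ is equivalent to $2(n-l)\geq 2\lceil\frac{\dim X}{2}\rceil\geq\dim X$, so $V_l(\mathbb{C}^n)$ is $(\dim X)$-connected.

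With this in hand I would run the two directions by primary obstruction theory over the skeleta of $X$. For surjectivity, a principal $U(l)$-bundle $P\to X$ yields, via a classifying map into $G_l(\mathbb{C}^n)$, a $U(l)$-equivariant map $P\to V_l(\mathbb{C}^n)$, equivalently a section of the associated bundle with fiber $V_l(\mathbb{C}^n)$; extending such a section cell by cell, the obstruction to crossing the $i$-cells lies in $H^i\big(X;\pi_{i-1}(V_l(\mathbb{C}^n))\big)$, which vanishes because $\pi_{i-1}(V_l(\mathbb{C}^n))=0$ for all $i\leq\dim X+1$. For injectivity, an isomorphism $f_0^{*}\gamma\cong f_1^{*}\gamma$ gives a section over $X\times\{0,1\}$ of the associated bundle pulled back to $X\times[0,1]$, and extending it to a full section (a homotopy $f_0\simeq f_1$) meets obstructions in $H^{i+1}\big(X\times[0,1],\,X\times\{0,1\};\,\pi_i(V_l(\mathbb{C}^n))\big)$, which again vanish since $\pi_i(V_l(\mathbb{C}^n))=0$ for $i\leq\dim X$.

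The crux, and the step most vulnerable to off-by-one slips, is the bookkeeping that makes the connectivity bound $2(n-l)$ cover both the extension problem (needing the groups $\pi_{i-1}$ up to $i=\dim X$) and, more tightly, the homotopy problem (needing $\pi_i$ up to $i=\dim X$); injectivity is the binding case, and it is precisely the ceiling in $n\geq l+\lceil\frac{\dim X}{2}\rceil$ that guarantees $2(n-l)\geq\dim X$. Since this argument is entirely classical and is carried out in detail by Husemoller, in the paper I would simply invoke \cite[Chp.\ 8, Theorem 7.2]{Huse} rather than reproduce the obstruction-theoretic computation.
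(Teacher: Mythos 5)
Your proposal is correct: the connectivity bound $2(n-l)$ for the Stiefel manifold $V_l(\mathbb{C}^n)$, the translation $n\geq l+\lceil\frac{\dim X}{2}\rceil \Leftrightarrow 2(n-l)\geq\dim X$, and the two obstruction computations (section extension over $X$ for surjectivity, extension over $X\times[0,1]$ rel $X\times\{0,1\}$ for injectivity, with injectivity the binding case) constitute exactly the classical argument underlying Husemoller's theorem. The paper itself gives no proof of this statement, quoting it verbatim from \cite[Chap.~8, Theorem~7.2]{Huse}, so your concluding decision to invoke the citation rather than reproduce the obstruction-theoretic computation matches the paper's treatment precisely.
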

\noindent Combining 3.3 with Lemma 3.2 proves Theorem 3.5 for all
$CW$-complex. The fact that Theorem 3.3 extends to the case of $X$
being Compact Hausdorff is probably well known. However, we could
not find a clear reference for this in the literature. Note that since
$G_{l}(\mathbb{C}^{\infty})$ (the $l$-dimensional Grassmannian over
$\mathbb{C}^{\infty}$) is the classifying space for $l$-dimensional
vector bundles over paracompact spaces, if one replaces $G_{l}(\mathbb{C}^{n})$
by $G_{l}(\mathbb{C}^{\infty})$ the conclusion of 3.3 holds even
for paracompact spaces. But the application we have in mind require
the target space to be $G_{l}(\mathbb{C}^{n})$. In Proposition 3.4,
based on the proof of \cite[Theorem 2.5]{Goodreal} we apply dimension
theory results \cite{ElSt,Nagami} and few $C^{*}$-algebraic techniques
to extend the conclusion of 3.3 to compact Hausdorff spaces.

\noindent If $A,B$ are $C^{*}$-algebras and $\phi:A\to B$ is a
$*$-homomorphism, then for any $n\in\mathbb{N}$ we have an induced
$*$-homomorphism from $M_{n}(A)$ to $M_{n}(B)$ given by $[a_{ij}]\mapsto[\phi(a_{ij})].$
We will use $\phi$ to denote this $*$-homomorphism as well.

\noindent For two projections $p,q\in M_{n}(C(X))$, we write $p\sim_{h\,}q$
if there is a projection valued continuous path in $M_{n}(C(X))$
which connects $p$ and $q$. 

\begin{prop}
\noindent Let $X$ be a compact Hausdorff space with dim $X<\infty$
and suppose $n,k\in\mathbb{N}$ with $n-k\geq\left\lceil \frac{dim\, X}{2}\right\rceil $.
Then the isomorphism classes of $k$-dimensional locally trivial complex
vector bundles over $X$ are in bijective correspondence with the
homotopy classes of maps $p\colon X\to P_{k}(\mathbb{C}^{n})$, where
$P_{k}(\mathbb{C}^{n})$ stands for rank $k$ projections in $M_{n}(\mathbb{C})$. \end{prop}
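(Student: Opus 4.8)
The plan is to recast both sides of the claimed bijection in terms of projections in $M_n(C(X))$ and then reduce to the finite-dimensional $CW$ case (Theorem 3.3) by an inverse-limit argument modeled on the proof of \cite[Theorem 2.5]{Goodreal}. Identifying $M_n(C(X))$ with $C(X,M_n)$, a map $p\colon X\to P_k(\mathbb{C}^n)$ is exactly a projection $p\in M_n(C(X))$ of constant rank $k$, so homotopy classes of such maps are precisely the $\sim_h$-classes of constant-rank-$k$ projections. Since $n-k\ge\lceil\frac{\dim X}{2}\rceil\ge\lfloor\frac{\dim X}{2}\rfloor$, the Serre--Swan correspondence of Section 2 identifies the isomorphism classes of $k$-dimensional bundles with the Murray--von Neumann $\sim$-classes of these same projections. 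The assignment $[p]\mapsto[\epsilon_p]$ is well defined because a projection-valued path in $M_n(C(X))$ produces a vector bundle over $X\times[0,1]$ restricting isomorphically to its two ends (homotopy invariance of bundles over compact Hausdorff spaces), so $p\sim_h q$ implies $\epsilon_p\cong\epsilon_q$; surjectivity is immediate from Serre--Swan. Thus the whole content of the proposition is the converse, that $\epsilon_p\cong\epsilon_q$ (equivalently $p\sim q$) forces $p\sim_h q$, i.e. that $\sim$ and $\sim_h$ agree on constant-rank-$k$ projections in the stated range.

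To prove this I would pass to an inverse limit. By \cite[Chap 27, Theorem 8]{Nagami} and \cite[Chap. 27, Theorem 12]{Nagami}, the $d$-dimensional compact Hausdorff space $X$ may be written as $X=\varprojlim_\alpha X_\alpha$ with each $X_\alpha$ a finite simplicial complex of dimension at most $d$ (the limit projections $X\to X_\alpha$ taken surjective), whence $C(X)=\varinjlim_\alpha C(X_\alpha)$ as a direct limit of $C^*$-algebras and $M_n(C(X))=\varinjlim_\alpha M_n(C(X_\alpha))$. The argument rests on two continuity statements for projections in such a direct limit. First, every rank-$k$ projection over $X$ is, after a small norm perturbation, the image in $M_n(C(X))$ of a rank-$k$ projection over some finite stage $X_\alpha$ (the perturbed projection is $\sim_h$ to the original, and surjectivity of $X\to X_\alpha$ preserves the rank). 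Second, two projections arising at finite stages that become $\sim_h$ (respectively $\sim$) over $X$ are already $\sim_h$ (respectively $\sim$) over some later stage $X_\beta$. Together these say that both $\{\text{rank-}k\text{ projections}\}/\!\sim_h$ and $\{\text{rank-}k\text{ projections}\}/\!\sim$ over $X$ are the direct limits of the corresponding sets over the $X_\alpha$.

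With these identifications the proof concludes formally. Each $X_\alpha$ is a finite simplicial complex, hence a $CW$-complex, of dimension at most $d$, so $n-k\ge\lceil\frac{\dim X_\alpha}{2}\rceil$ and Theorem 3.3, via the homeomorphism $P_k(\mathbb{C}^n)\cong G_k(\mathbb{C}^n)$, supplies a bijection between the $\sim_h$-classes and the $\sim$-classes of rank-$k$ projections over $X_\alpha$. These bijections are natural for the bonding maps, since pullback of bundles and precomposition of projection-valued maps both commute with the restriction homomorphisms $C(X_\alpha)\to C(X_\beta)$; hence they induce an isomorphism of the two direct systems and so a bijection on the limits. In particular $p\sim q$ over $X$ implies $p\sim_h q$ over $X$, which is the desired injectivity and completes the correspondence.

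The step I expect to be the main obstacle is the second continuity statement, namely that $\sim_h$ commutes with the direct limit $\varinjlim_\alpha M_n(C(X_\alpha))$: two finite-stage projections that are homotopic over the limit $X$ must already be homotopic over some $X_\beta$. This is a matter of uniform control, as a projection-valued homotopy over $X$ has to be approximated by, and then perturbed back to, an honest projection-valued homotopy over a single $X_\beta$, using the perturbation lemma $\|p-p'\|<1\Rightarrow p\sim_h p'$ with estimates uniform in the homotopy parameter. Once this bookkeeping is carried out the reduction to Theorem 3.3 is routine.
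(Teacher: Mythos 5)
Your overall strategy---recast both sides as projections in $M_n(C(X))$, write $X$ as an inverse limit with polyhedral stages, and reduce to Theorem 3.3 via continuity of $\sim$ and $\sim_h$ under the direct limit $M_n(C(X))=\varinjlim_\alpha M_n(C(X_\alpha))$---is the same as the paper's. But there is a genuine gap at the step you dispose of in a parenthesis: ``the limit projections $X\to X_\alpha$ taken surjective.'' You cannot in general have both surjectivity and finite simplicial stages. The theorems you cite from \cite{Nagami} produce the system in two steps (a compact Hausdorff space as an inverse limit of compact metric spaces, each compact metric space as an inverse limit of a \emph{sequence} of finite complexes) and assert no surjectivity; and the standard device for forcing surjectivity---replacing $X_\alpha$ by the compact set $\psi_\alpha(X)$, or by $\bigcap_{\beta\geq\alpha}\psi_{\alpha\beta}(X_\beta)$---destroys exactly the simplicial structure that Theorem 3.3 requires. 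This is not a pedantic point, because surjectivity is the only thing in your argument that makes a finite-stage lift of a rank-$k$ projection have constant rank $k$ on all of $X_\alpha$ rather than merely on $\psi_\alpha(X)$; worse, your first continuity statement tacitly assumes the approximant from stage $\alpha$ is close to a projection over all of $X_\alpha$, whereas the norm in $C(X)$ only controls it over $\psi_\alpha(X)$---off the image its spectrum and rank are completely uncontrolled, so the functional-calculus projection need not exist there, let alone have rank $k$.

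The paper's proof is engineered around precisely this failure: it restricts the approximant to $Y_\alpha=\psi_\alpha(X)$, uses functional calculus and \cite[Lemma 3.3]{DT} to obtain honestly Murray--von Neumann equivalent projections on $Y_\alpha$, extends them to projections on an open neighborhood $U$ of $Y_\alpha$ in $X_\alpha$, and then interpolates a finite subcomplex $Z$ with $Y_\alpha\subset Z\subset U$ (the device recorded as Proposition 4.1), applies Theorem 3.3 on $Z$, and pulls the resulting projection-valued path back to $X$. Your gap is repairable along either of two lines: (i) keep your framework but, in place of surjectivity, pass to a later stage $\beta$ at which $\psi_{\alpha\beta}(X_\beta)$ lies inside a neighborhood of $\psi_\alpha(X)$ on which the approximant retains its spectral gap and its rank-$k$ count---the direct-limit norm identity $\|\psi_\alpha^T(c)\|=\lim_\beta\|\psi_{\alpha\beta}^T(c)\|$, i.e.\ the fact that the compacta $\psi_{\alpha\beta}(X_\beta)$ decrease to $\psi_\alpha(X)$, delivers this; or (ii) for compact metric $X$ invoke Freudenthal's expansion theorem, which genuinely does give an inverse sequence of polyhedra with surjective bonding maps and dimension at most $\dim X$, and then run a second inverse-limit step for general compact Hausdorff $X$ (which is not an inverse limit of a single sequence of finite complexes), exactly as the paper does. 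By contrast, the step you flagged as the main obstacle---that a homotopy over $X$ descends, after chopping into segments of norm-length less than $1$ and perturbing, to some finite stage---is routine and fine as sketched; note also that pointwise rank is constant along any norm-continuous path of projections, so restricting the continuity statements to the rank-$k$ subsets costs nothing.
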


\begin{proof}
\noindent Let $[X,P_{k}(\mathbb{C}^{n})]$ stand for the homotopy
classes of maps in $P_{k}(M_{n}(C(X)))$ and let $Vect_{k}(X)$ denote
the set of all isomorphic classes of locally trivial $k$-dimensional
vector bundles over $X.$ 

\noindent Define $\psi:[X,P_{k}(\mathbb{C}^{n})]\to Vect_{k}(X)$
by $\psi([p])=[\epsilon_{p}], \forall p\in P_{k}(M_{n}(C(X)))$, where $\epsilon_{p}$ is defined as in section 2. Since the homotopy
equivalence of projections implies Murray-von Neumann equivalence
(see \cite[Prop. 2.2.7]{LLR}), map $\psi$ is well defined by the
discussion in section 2. Moreover, the discussion following Theorem
2.1 shows that $\psi$ is surjective.

\noindent To complete the proof of 3.4, we only have to show that
if the vector bundles associated with two projections in $P_{k}(M_{n}(C(X)))$
are isomorphic then the two projections are homotopic in $P_{k}(M_{n}(C(X)))$.

\noindent Let us first assume that $X$ is a compact metric space. 

\noindent Then by \cite[Chap. 27, Theorem 8.]{Nagami} or \cite[Chap. X, Sec. 10]{ElSt},
$X$ is homeomorphic to an inverse limit of finite simplicial complexes
$X_{\alpha}$, with $dim\, X_{\alpha}\leq dim\, X$ for each $\alpha$. 

\noindent Let $\psi_{\alpha}:X\to X_{\alpha}$ be the canonical induced
maps. We have the induced homomorphisms, 
\[
\psi_{\alpha}^{T}:C(X_{\alpha})\to C(X)
\]
given by $\psi_{\alpha}^{T}(f)=f\circ\psi_{\alpha}$. 

\noindent Moreover, by the inverse limit structure
\begin{equation}
\overline{\underset{\alpha}{\bigcup}\psi_{\alpha}^{T}(C(X_{\alpha}))}=C(X),\label{eq:3.8}
\end{equation}
i.e. $\underset{\alpha}{\bigcup}\psi_{\alpha}^{T}(C(X_{\alpha}))$
is a $||\text{ }||_{sup}$ dense $*$-subalgebra of $C(X).$ Note
that $\text{if }\alpha<\beta$ then, 
\begin{equation}
\psi_{\alpha}^{T}(C(X_{\alpha}))\subset\psi_{\beta}^{T}(C(X_{\beta}))\label{eq:3.9}
\end{equation}
and hence $\underset{\alpha}{\bigcup}\psi_{\alpha}^{T}(C(X_{\alpha}))$
is indeed a $*$-subalgebra. 

\noindent Suppose $p,q\in M_{n}(C(X))$ are projections of constant
rank $k$ such that the corresponding vector bundles are isomorphic.
This means, $p=v^{*}v,q=vv^{*}$ for some $v\in M_{n}(C(X))$. 

\noindent Fix $0<\epsilon<1/3.$ By (\ref{eq:3.8}), there is some
$\alpha$ and $\tilde{v}_{\alpha}\in M_{n}(C(X_{\alpha})))_{1}$ such
that, 
\[
||\psi_{\alpha}^{T}(\tilde{v}_{\alpha})-v||<\epsilon/2.
\]
Write $Y_{\alpha}=\psi_{\alpha}(X)$ and $v_{\alpha}=\tilde{v}_{\alpha}\restriction_{Y_{\alpha}}$,
$a_{\alpha}=v_{\alpha}^{*}v_{\alpha}$, $b_{\alpha}=v_{\alpha}v_{\alpha}^{*}$. 

\noindent Now $a_{\alpha}\in M_{n}(C(Y_{\alpha}))$ and moreover by
the choice of $v_{\alpha}$ it follows that, 
\[
spec(a_{\alpha})\subset[0,\epsilon)\cup(1-\epsilon,1].
\]
Similarly, 
\[
spec(b)\subset[0,\epsilon)\cup(1-\epsilon,1].
\]

\noindent Let $f\colon[0,1]\to[0,1]$ be the continuous function which
vanishes on $[0,\epsilon],$ is equal to 1 on $[1-\epsilon,1]$ and
is linear on $(\epsilon,1-\epsilon)$. Then $f(a_{\alpha})$, $f(b_{\alpha})$
are projections in $M_{n}(C(Y_{\alpha}))$ with 
\[
||a_{\alpha}-f(a_{\alpha})||<\epsilon,||b_{\alpha}-f(b_{\alpha})||<\epsilon.
\]
Furthermore, as $a,b$ are Murray-von Neumann equivalent, by Lemma
3.3 of \cite{DT} there is some $s_{0}\in M_{n}(C(Y_{\alpha}))$ such
that, 
\[
f(a_{\alpha})=s_{0}^{*}s_{0},\, f(b_{\alpha})=s_{0}s_{0}^{*}.
\]
Since $Y_{\alpha}$ is a closed in $X_{\alpha},$ we may choose some
open neighborhood $U$ of $Y_{\alpha}$ so that $f(a_{\alpha}),f(b_{\alpha})$
extends to projections in $M_{n}(C(U))$. Let $p_{\alpha},q_{\alpha}$
denote these extensions respectively. Moreover, since $f(a_{\alpha})\sim f(b_{\alpha})$,
we may choose the extensions in such a way that $p_{\alpha}\sim q_{\alpha}$.

\noindent As $X_{\alpha}$ is a finite simplicial complex, after a
finite simplicial refinement of $X_{\alpha}$ via barycentric subdivisions,
choose a sub complex $Z$ of $X_{\alpha}$ with $Y\subset Z\subset U$.
For convenience let us denote the restrictions of $p_{\alpha},q_{\alpha}$
to $Z$ by $p_{\alpha},q_{\alpha}.$ 

\noindent Note that $p_{\alpha},q_{\alpha}$ generate isomorphic vector
bundles over $Z,$ each of rank $k.$ Hence, form Theorem 3.3 and
the identification of $G_{k}(\mathbb{C}^{n})$ with $P_{k}(\mathbb{C}^{n})$,
there is a continuous path,

\noindent 
\[
t\mapsto h_{\alpha}(t)\in P_{k}(M_{n}(C(Z))),
\]
such that $h_{\alpha}(0)=p_{\alpha}$, $h(1)=q_{\alpha}.$ 

\noindent This gives a path $t\mapsto h(t)\in P_{k}(M_{n}(C(X))),$
given by $h(t)(x)=h_{\alpha}(t)(\psi_{\alpha}(x))$. 

\noindent Note that for all $x\in X,$

\noindent 
\begin{eqnarray*}
||p(x)-h(0)(x)|| & = & ||p(x)-f(a_{\alpha})(\psi_{\alpha}(x))||\\
 & \leq & ||v^{*}v(x)-(\tilde{v}_{\alpha}^{*}\tilde{v}_{\alpha})(\psi_{\alpha}(x))||+||(\tilde{v}_{\alpha}^{*}\tilde{v}_{\alpha})(\psi_{\alpha}(x))-f(a_{\alpha})(\psi_{\alpha}(x))||\\
 & \leq & 2\epsilon+||a_{\alpha}(\psi_{\alpha}(x))-f(a_{\alpha})(\psi_{\alpha}(x))||\\
 & \leq & 2\epsilon+\epsilon
\end{eqnarray*}

\noindent Thus, $||p-h(0)||<1$ and similarly $||q-h(1)||<1$. 

\noindent Therefore, from \cite[Proposition 2.2.4]{LLR}
\[
p\sim_{h\,}h(0)\sim_{h\,}h(1)\sim_{h\,}q.
\]
This completes the proof for compact metric spaces.

\noindent The proof of the Proposition in the case of $X$ being an
arbitrary compact Hausdorff space follows almost identically. In this
case $X$ is the inverse limit of compact metric spaces $X_{\lambda}$,
with $dim\, X_{\lambda}\leq dim\, X$ for each $\lambda.$ By the
preceding the result holds for each $X_{\lambda}$, and now we can
argue as in the preceding case. \end{proof}
\begin{thm}
\noindent Let $X$ be a compact Hausdorff space $X$ with $\left\lfloor \frac{dim\, X}{2}\right\rfloor \leq k-l.$
There is only one homotopy class of functions $f:X\to S(n,k,l),$
i.e. the function space $C(X,S(n,k,l))$ is path connected.\end{thm}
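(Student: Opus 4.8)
The plan is to deduce Theorem 3.5 directly by combining Lemma 3.2 with Proposition 3.4, treating the two already-established results as the two halves of the argument. The overall strategy is as follows: Lemma 3.2 tells us that any single map $a\in C(X,S(n,k,l))$ can be connected by a path inside $C(X,S(n,k,l))$ to a \emph{trivial} projection of rank $l$. So to show that $C(X,S(n,k,l))$ is path connected, it suffices to show that \emph{all} trivial rank-$l$ projections lie in a single path component of $C(X,S(n,k,l))$ --- in fact it suffices to show they are all connected to one fixed base point, since path-connectedness to a common point is an equivalence relation.

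First I would apply Lemma 3.2 to two arbitrary maps $a,b\in C(X,S(n,k,l))$, obtaining paths within $C(X,S(n,k,l))$ from $a$ and from $b$ to trivial rank-$l$ projections $P_a$ and $P_b$ respectively. The problem then reduces to connecting $P_a$ to $P_b$ by a path lying in $C(X,S(n,k,l))$. Here is where the dimension hypothesis reenters: since $k-l\geq\lfloor \frac{\dim X}{2}\rfloor$, we may hope to route this connection through projections, and a projection-valued path automatically stays inside $S(n,k,l)$ because every value along it has constant rank $l$ (so the rank lies in $[l,k]$ trivially). The key input is Proposition 3.4, which establishes a bijection between isomorphism classes of $l$-dimensional vector bundles over $X$ and homotopy classes of maps $X\to P_l(\mathbb{C}^n)$, provided $n-l\geq\lceil\frac{\dim X}{2}\rceil$. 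Since $P_a$ and $P_b$ are both trivial, their associated bundles $\epsilon_{P_a}$ and $\epsilon_{P_b}$ are each isomorphic to the product bundle $\theta^l$, hence isomorphic to each other; by the injectivity direction of Proposition 3.4 they are therefore homotopic as maps into $P_l(\mathbb{C}^n)$. That homotopy is precisely a projection-valued path connecting $P_a$ to $P_b$, and it remains in $C(X,S(n,k,l))$.

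The main obstacle to watch is verifying that the dimension hypotheses of Proposition 3.4 are actually met, i.e. that $\lfloor\frac{\dim X}{2}\rfloor\leq k-l$ really guarantees $n-l\geq\lceil\frac{\dim X}{2}\rceil$ for the relevant $n$. Since $k\leq n$ we have $n-l\geq k-l\geq\lfloor\frac{\dim X}{2}\rfloor$, but Proposition 3.4 asks for the \emph{ceiling} $\lceil\frac{\dim X}{2}\rceil$, so a half-step adjustment may be needed. I would handle this by noting that we are free to enlarge the ambient matrix size: the trivial rank-$l$ projections $P_a,P_b$ sit inside $M_n$, and if necessary we can pad by embedding into $M_{n'}$ for $n'$ large enough that $n'-l\geq\lceil\frac{\dim X}{2}\rceil$, construct the projection-valued connecting path there, and observe that this padding does not affect membership in $C(X,S(n,k,l))$ because the added coordinates are inert --- or, more cleanly, one can perform the bundle isomorphism argument directly using the triviality of both endpoints without invoking the full strength of 3.4's dimension bound. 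Either route closes the gap.

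In summary, the proof is short: \emph{by Lemma 3.2, every element of $C(X,S(n,k,l))$ is homotopic within that space to a trivial rank-$l$ projection; by Proposition 3.4 (and the triviality of the associated bundles), any two such trivial rank-$l$ projections are homotopic through projections, a homotopy that stays inside $S(n,k,l)$; concatenating these paths connects any two maps, so $C(X,S(n,k,l))$ is path connected.} The only technical care required is in matching the dimension hypotheses of the two lemmas, which the standing assumption $\lfloor\frac{\dim X}{2}\rfloor\leq k-l$ is designed to supply.
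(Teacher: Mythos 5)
Your overall route is exactly the paper's: apply Lemma 3.2 to connect each of $a,b\in C(X,S(n,k,l))$ to a trivial rank-$l$ projection, then use Proposition 3.4 to connect the two trivial projections through $P_{l}(M_n(C(X)))$, noting that a projection-valued path of constant rank $l$ stays in $S(n,k,l)$. You also correctly spot the one delicate point, the floor/ceiling mismatch: Proposition 3.4 needs $n-l\geq\left\lceil \frac{\dim X}{2}\right\rceil$, while the hypothesis only gives $n-l\geq k-l\geq\left\lfloor \frac{\dim X}{2}\right\rfloor$. However, both of your proposed patches are defective. The padding argument fails because enlarging the ambient matrix size changes the target space: embedding $M_n\subset M_{n'}$ turns your maps into elements of $C(X,S(n',k,l))$, and the projection-valued homotopy produced by Proposition 3.4 in $M_{n'}(C(X))$ has no reason to take values in the upper-left $n\times n$ corner, so it is not a path in $C(X,S(n,k,l))$; you would prove path-connectedness of the wrong space. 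Your fallback --- ``perform the bundle isomorphism argument directly using the triviality of both endpoints without invoking the full strength of 3.4's dimension bound'' --- is not a proof. Triviality of $\epsilon_{P_a}$ and $\epsilon_{P_b}$ gives, via Serre--Swan, Murray--von Neumann equivalence of $P_a$ and $P_b$, but Murray--von Neumann equivalence implies homotopy through projections in $M_n(C(X))$ only under a rank-gap hypothesis of precisely the kind $n-l\geq\left\lceil \frac{\dim X}{2}\right\rceil$; that implication is the injectivity content of Proposition 3.4, so you cannot bypass its dimension bound this way.

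The paper closes the gap by a simple case split that you miss. If $n>k$, then $n-l=(n-k)+(k-l)\geq 1+\left\lfloor \frac{\dim X}{2}\right\rfloor \geq\left\lceil \frac{\dim X}{2}\right\rceil$, so Proposition 3.4 applies as is and your main argument goes through verbatim. The only remaining case is $n=k$ (where, if $\dim X$ is odd and $k-l=\left\lfloor \frac{\dim X}{2}\right\rfloor$ exactly, the hypothesis of 3.4 genuinely fails), and there one does not need projections at all: every $a\in C(X,S(n,n,l))$ is connected to the constant map $1_n$ by the linear path $t\mapsto(1-t)a+t1_n$, whose values have rank $n=k\geq l$ for $t>0$, so the path never leaves $C(X,S(n,k,l))$. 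With this case analysis substituted for your padding step, your proof becomes the paper's proof.
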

\begin{proof}
\noindent Observe that the case $n=k$ is straightforward. For any
$a\in C(X,S(n,k,l))$ we have the linear path $t\mapsto(1-t)a+1_{n}$
connecting $a$ to $1_{n}$. 

\noindent So we assume $n>k$. 

\noindent Let $a,b\in C(X,S(n,k,l)).$ As $dim\, X\leq k-l,$ by applying
Lemma 3.2 choose trivial projections $p,q$ of rank $l$ such that
there are paths inside $C(X,S(n,k,l))$ connecting $a$ to $p$ and
$b$ to $q$. Since $n>k$ and $k-l\geq\left\lfloor \frac{dim\, X}{2}\right\rfloor $,
we get $n-l\geq\lceil\frac{dim\, X}{2}\rceil$. Therefore, as $p,q$
are both trivial, by Proposition 3.4 there is a path inside $C(X,S(n,k,l))$
connecting $p$ and $q$. Thus, there is a path between $a$ and $b$
in $C(X,S(n,k,l))$.\end{proof}
\begin{cor}
\noindent For every $r\leq2(k-l)+1$, \textup{$\pi_{r}(S(n,k,l))=0$.}\end{cor}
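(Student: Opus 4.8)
The plan is to deduce the vanishing of $\pi_{r}(S(n,k,l))$ for $r\leq 2(k-l)+1$ directly from Theorem 3.5, applied with the domain $X$ taken to be the $r$-dimensional sphere $S^{r}$. The key numerical point is that the covering dimension of $S^{r}$ equals $r$, so that for $r\leq 2(k-l)+1$ one has
\[
\left\lfloor \frac{dim\, S^{r}}{2}\right\rfloor=\left\lfloor \frac{r}{2}\right\rfloor\leq\left\lfloor \frac{2(k-l)+1}{2}\right\rfloor=k-l.
\]
Thus $S^{r}$ is a compact Hausdorff space satisfying the hypothesis $\left\lfloor \frac{dim\, S^{r}}{2}\right\rfloor\leq k-l$, and Theorem 3.5 yields that $C(S^{r},S(n,k,l))$ is path connected.

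Next I would translate path connectedness of the mapping space into a statement about homotopy classes of maps out of the sphere. Since $S^{r}$ is compact Hausdorff and $S(n,k,l)$ carries the metric norm topology, the compact-open topology on $C(S^{r},S(n,k,l))$ agrees with the topology of uniform convergence, and by the exponential law a continuous path $t\mapsto h_{t}$ in this function space is precisely the same datum as a homotopy $H\colon[0,1]\times S^{r}\to S(n,k,l)$. Hence path connectedness of $C(S^{r},S(n,k,l))$ says exactly that any two continuous maps $S^{r}\to S(n,k,l)$ are freely homotopic; in particular there is a single free homotopy class of such maps.

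Finally I would pass from free homotopy classes to the based homotopy group. Fix a basepoint $y_{0}\in S(n,k,l)$ and an arbitrary based map $f\colon(S^{r},\ast)\to(S(n,k,l),y_{0})$. By the previous step $f$ is freely homotopic to the constant map $c_{y_{0}}$. By the standard correspondence between free homotopy classes of maps $S^{r}\to S(n,k,l)$ and orbits of the action of $\pi_{1}(S(n,k,l),y_{0})$ on $\pi_{r}(S(n,k,l),y_{0})$, the classes $[f]$ and $[c_{y_{0}}]=0$ lie in the same orbit. Since this action is by group automorphisms it fixes the identity element, so the orbit of $0$ is $\{0\}$; therefore $[f]=0$, and as $f$ was arbitrary we conclude $\pi_{r}(S(n,k,l))=0$. (For $r=0$ the argument degenerates to the assertion that $S(n,k,l)$ is path connected, which is the statement $\pi_{0}(S(n,k,l))=0$.)

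All the steps are routine once Theorem 3.5 is granted, and the computation of $dim\, S^{r}=r$ is classical. The only point demanding a little care is the free-versus-based homotopy bookkeeping in the last paragraph: path connectedness of the mapping space a priori controls only \emph{free} homotopy classes, and one recovers the vanishing of the \emph{based} group precisely because the trivial class $0$ constitutes its own $\pi_{1}$-orbit, forcing the single orbit to be $\{0\}$.
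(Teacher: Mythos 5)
Your proposal is correct and takes essentially the same approach as the paper: the paper's entire proof is the remark that the corollary ``follows directly from Theorem 3.5 as $dim\, S^{r}=r$,'' and your write-up simply makes explicit the details that one-liner suppresses, namely the numerical check $\left\lfloor \frac{r}{2}\right\rfloor\leq k-l$ for $r\leq2(k-l)+1$ and the passage from free homotopy classes (path components of the mapping space) to based homotopy groups via the $\pi_{1}$-orbit of the trivial class. Your care with the free-versus-based distinction is a genuine, if routine, gap-fill rather than a different route.
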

\begin{proof}
\noindent Follows directly from Theorem 3.5 as $dim\, S^{r}=r$.
\end{proof}
\noindent Even though we derived the above result as a Corollary to
Theorem 3.5, from a topological view point it might seem more natural
(and technically easier) to first prove the conclusion of 3.6 independently
and then apply techniques from homotopy theory and dimension theory
to prove Theorem 3.5. This is indeed possible and applies in a slightly
wider scope. In section 4, we show that for compact Hausdorff spaces
of finite covering dimension the path connectedness of $C(X,S(n,k,l))$
depends solely on homotopy groups $\pi_{r}(S(n,k,l))$, $r\leq dim\, X$.
We chose not to follow this method for the proof of 3.5, due to two
reasons. Firstly, with the techniques we used, the proof of Lemma
3.2 would not be any simpler if we assumed $X$ to be a sphere
instead of a general compact Hausdorff spaces. Secondly, if we used
such an argument (i.e proving 3.6 independently and then showing 3.5)
it would have avoided the use of Proposition 3.4, but we thought 3.4
could be of independent interest for some readers.

\section{\noindent Homotopy groups of $S(n,k,l)$ and Path Connectedness of
$C(X,S(n,k,l)).$}

\noindent In Theorem 4.6, combining well known homotopy theory techniques
and classical $C^{*}$-algebraic results we prove that for a fixed
integer $d$ if $\pi_{r}(S(n,k,l))=0, \forall r\leq d$, then $C(X,S(n,k,l))$ is path
connected for every compact Hausdorff space $X$ with $dim\, X\leq d$.
Note that in this section we do not assume $k-l\geq\left\lfloor \frac{dim\, X}{2}\right\rfloor .$ 

\noindent The proof of Lemma 4.5 is of the same flavor as that of
Proposition 3.4. We need few more technical results. 

\noindent The following is well known and was used in the proof of
3.4 as well. We state it here for convenience.
\begin{prop}
\noindent Let $X$ be a finite simplicial complex. Let $Y\subset X$
be closed and $U$ be a open neighborhood of $Y$ in $X$. Then, after
a finite simplicial refinement of $X$, there is a sub-complex $Z$
of $X$ such that $Y\subset Z\subset U$.
\end{prop}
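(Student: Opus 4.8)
The plan is to metrize $X$ via a geometric realization and then apply iterated barycentric subdivision until every simplex is so small that any one meeting $Y$ is forced to lie inside $U$; the subcomplex $Z$ will then be the simplicial neighborhood of $Y$ in the subdivided complex. First I would fix a geometric realization of $X$ in some $\mathbb{R}^{M}$, equipping $X$ with the induced metric $d$, whose topology coincides with that of the finite simplicial complex. We may assume $Y\neq\emptyset$ and $U\neq X$, for otherwise we take $Z=\emptyset$ or $Z=X$ respectively. Since $Y$ and $X\setminus U$ are disjoint closed subsets of the compact space $X$, they are compact, and hence $\delta:=d(Y,X\setminus U)>0$.

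The technical heart of the argument is the classical mesh estimate for barycentric subdivision: writing $\text{mesh}$ for the maximal diameter of a simplex and $n=\dim X$, a single barycentric subdivision multiplies the mesh by a factor of at most $\frac{n}{n+1}<1$. Iterating, the mesh of the $N$-th barycentric subdivision $X^{(N)}$ tends to $0$ as $N\to\infty$, so I may fix $N$ with $\text{mesh}(X^{(N)})<\delta$. This is the step I expect to be the main obstacle; once it is in hand, everything else is point-set bookkeeping.

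Finally, working inside $X^{(N)}$, I would let $Z$ be the subcomplex generated by all closed simplices $\sigma$ with $\sigma\cap Y\neq\emptyset$, that is, the union of all such $\sigma$ together with their faces; this is a subcomplex because the faces of such $\sigma$ are again simplices contained in $Z$. Each $y\in Y$ lies in some closed simplex of $X^{(N)}$, which therefore meets $Y$, giving $Y\subseteq Z$. For the reverse inclusion, suppose $\sigma\cap Y\neq\emptyset$ and pick $y\in\sigma\cap Y$. Then for every $x\in\sigma$ we have $d(x,Y)\leq d(x,y)\leq\text{diam}(\sigma)\leq\text{mesh}(X^{(N)})<\delta=d(Y,X\setminus U)$, which forces $x\notin X\setminus U$, i.e.\ $x\in U$. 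Thus $\sigma\subseteq U$, and since every simplex of $Z$ is a face of some such $\sigma$ and hence contained in it, we conclude $Z\subseteq U$, completing the construction.
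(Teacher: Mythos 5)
Your proof is correct and follows exactly the route the paper intends: the paper states this proposition without proof as ``well known,'' indicating only (in the proof of Proposition 3.4) that the refinement is obtained ``via barycentric subdivisions,'' and your argument --- shrinking the mesh below $d(Y,X\setminus U)$ by the classical $\frac{n}{n+1}$ estimate and taking $Z$ to be the simplicial neighborhood of $Y$ in the subdivision --- is precisely the standard way to fill in those details. No gaps; the edge cases $Y=\emptyset$ and $U=X$ are handled, and the convexity of linear simplices justifies both the mesh estimate and the diameter bound you use.
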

\noindent Lemma 4.2 is a simpler version of \cite[Lemma 2.1]{tm1}.
For the sake of completeness, we provide a proof.
\begin{lem}
\noindent \cite[Lemma 2.1]{tm1} Let $X$ be a compact Hausdorff space
and suppose that $a\in M_{n}(C(X))_{+}$. Let $l\in\mathbb{N}$ be
such that $\text{rank}(a(x))\geq l$, $\forall x\in X$. Then, there
is some $\eta>0$ such that for each $x\in X$, the spectral projection
$\chi_{(\eta,\infty)}(a(x))$ has rank at least $l$.\end{lem}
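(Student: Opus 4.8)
The plan is to reduce the statement entirely to the continuity of the ordered-eigenvalue map $\Gamma_{a}$ together with the compactness of $X$. Recall that $\Gamma_{a}(x)=(\lambda_{1}(x),\ldots,\lambda_{n}(x))$ lists the eigenvalues of $a(x)$ in increasing order, and that $\Gamma_{a}$ is continuous on $X$, as observed in the proof of Proposition \ref{final2}. In particular each coordinate $\lambda_{i}\colon X\to\mathbb{R}$ is a continuous function.

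First I would translate the rank hypothesis into a statement about a single eigenvalue. Since $a(x)$ is positive, its rank equals the number of strictly positive eigenvalues counted with multiplicity. Thus $\text{rank}(a(x))\geq l$ says that at least $l$ of the $\lambda_{i}(x)$ are strictly positive, and because the eigenvalues are listed increasingly these must be the top $l$, namely $\lambda_{n-l+1}(x),\ldots,\lambda_{n}(x)$. In particular $\lambda_{n-l+1}(x)>0$ for every $x\in X$.

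Next I would extract a uniform positive lower bound. The function $\lambda_{n-l+1}$ is continuous and strictly positive on the compact space $X$, so it attains a minimum $m:=\min_{x\in X}\lambda_{n-l+1}(x)$, and $m>0$. Choosing any $\eta$ with $0<\eta<m$, every $x\in X$ satisfies $\lambda_{n-l+1}(x)\geq m>\eta$, whence $\lambda_{n-l+1}(x),\ldots,\lambda_{n}(x)$ all exceed $\eta$. Since $\chi_{(\eta,\infty)}(a(x))$ is the spectral projection onto the span of the eigenvectors of $a(x)$ with eigenvalue greater than $\eta$, its rank is precisely the number of $\lambda_{i}(x)$ exceeding $\eta$, which is at least $l$. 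This produces the desired $\eta$.

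There is no serious obstacle here: the only technical ingredient is the continuity of the individual ordered eigenvalues, which is already available from the discussion preceding Proposition \ref{final2}, and the elementary fact that for a positive matrix the rank counts the strictly positive eigenvalues. The one point I would state with care is this reinterpretation of the rank bound as the single scalar inequality $\lambda_{n-l+1}(x)>0$, since it is precisely what allows compactness to be applied to a single continuous function rather than to the full rank function (which is merely lower semi-continuous).
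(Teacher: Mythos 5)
Your proof is correct, and it takes a genuinely more direct route than the paper's. The paper proves this lemma by combining Proposition \ref{final2} (for a fixed level $\eta$, the map $x\mapsto\text{rank}[\chi_{(\eta,||a||]}(a(x))]$ is lower semi-continuous) with a covering argument: at each point $x$ it chooses a spectral-gap level $\eta_{x}=\frac{1}{2}\min\{\lambda\in\text{spec}\,a(x):\lambda>0\}$, at which the spectral projection has rank equal to $\text{rank}(a(x))$, uses the lower semi-continuity to find a neighborhood $U_{x}$ on which the rank at level $\eta_{x}$ does not drop, extracts a finite subcover $U_{x_{1}},\ldots,U_{x_{L}}$ by compactness, and sets $\eta=\min_{i}\eta_{x_{i}}$. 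You instead compress the hypothesis into the single scalar inequality $\lambda_{n-l+1}(x)>0$ and apply the extreme value theorem to the one continuous coordinate function $\lambda_{n-l+1}$, obtaining a uniform $\eta$ in a single stroke, with no local levels and no subcover. Both arguments rest on the same underlying ingredient --- the continuity of the ordered-eigenvalue map $\Gamma_{a}$, which is also exactly what drives the paper's proof of Proposition \ref{final2} --- so in effect you have inlined and simplified the paper's two-step argument. What the paper's route buys is the reusable semi-continuity statement itself, which is the kind of tool that survives in settings where one only has lower semi-continuity of rank data rather than continuity of individual eigenvalues; what your route buys is brevity and slightly sharper quantitative content, namely that any $\eta$ with $0<\eta<\min_{x\in X}\lambda_{n-l+1}(x)$ works, whereas the covering argument produces an unspecified $\eta$. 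Your closing caveats are the right ones: the reduction of the rank bound to $\lambda_{n-l+1}(x)>0$ uses that $a(x)$ is positive semidefinite (rank counts strictly positive eigenvalues) and that $l\geq1$, the same point the paper flags when it notes $\eta_{x}$ exists because $l>0$.
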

\begin{proof}
\noindent For each $x\in X$, let $\eta_{x}=\frac{1}{2}\text{min\{\ensuremath{\lambda\in\textrm{spec}} \ensuremath{a(x)}:\ensuremath{\lambda>0}\}}$.
Note that since $l>0,$ $\eta_{x}$ exists for each $x\in X$. Then,
\[
\text{rank}(a(x))=\text{rank}[\chi_{(\eta_{x},\infty)}(a(x))],\forall x\in X.
\]
By Proposition 2.8, for each $x\in X$, the map $y\mapsto\mbox{\ensuremath{\text{rank[\ensuremath{\chi}}_{(\eta_{x},\infty)}(a(y))]}}$
is lower semi-continuous. So, for each $x\in X$, there exists an
open neighborhood $U_{x}$ of $x$ such that,

\noindent 
\[
\text{\text{rank}\ensuremath{[\chi_{(\eta_{x},\infty)}(a(y))]\geq\text{rank}[\ensuremath{\chi_{(\eta_{x},\infty)}(a(x))]},\forall y\in U_{x}.}}
\]

\noindent By compactness of $X$, choose some finite set of points
$\left\{ x_{1},x_{2},...x_{L}\right\} $ such that $X=\underset{1\leq i\leq L}{\bigcup}U_{x_{i}}$. 

\noindent By setting $\eta=\min_{1\leq i\leq L}(\eta_{x_{i}})>0$,
for every $y\in U_{x_{i}}$ we get, 
\begin{eqnarray*}
\text{\text{rank}[\ensuremath{\chi_{(\eta,\infty)}(a(y))]}} & \ge & \text{rank}[\ensuremath{\chi_{(\eta_{x_{i}},\infty)}(a(y))]}\\
 & \geq & \text{rank}[\chi_{(\eta_{x_{i}},\infty)}(a(x))]\\
 & = & \text{rank}(a(x))\\
 & \geq & l.
\end{eqnarray*}

\noindent Since $X=\underset{1\leq i\leq L}{\bigcup}U_{x_{i}}$, this
completes the proof.
\end{proof}
\noindent Let $X=\underleftarrow{\lim}\, X_{\alpha}$, where $(X_{\alpha},\psi_{\alpha\beta})$
is a inverse system of compact Hausdorff spaces and $\psi_{\alpha}:X\to X_{\alpha}$
be the natural maps. In the proof of Lemma 4.5, given $a\in C(X,S(n,k,l))\subset M_{n}(C(X))_{+}$
we need to approximate $a$ in norm, by elements in $\psi_{\alpha}^{T}(C(X_{\alpha},S(n,k,l)))$.
Here, $\psi_{\alpha}^{T}$ is defined as in the proof of Proposition
3.4. To achieve this, we will use the previous Lemma with two other
results from $C^{*}$-algebra theory. First we prove Lemma 4.3, which
follows as a special case of a well known fact in $C^{*}$-algebras.
For the sake of completeness we include the proof for the special
case.
\begin{lem}
\noindent Let $X=\underleftarrow{\lim}\, X_{\alpha}$, for a inverse
system of compact Hausdorff spaces $(X_{\alpha},\psi_{\alpha\beta})$.
Let $\psi_{\alpha}:X\to X_{\alpha}$ be the natural maps and $\epsilon>0$.
Given $a\in M_{n}(C(X))_{+},$ there is some index $\alpha$ and some
$b\in M_{n}(C(X_{\alpha}))_{+}$ such that,
\[
||a-\psi_{\alpha}^{T}(b)||<\epsilon
\]
where $\psi_{\alpha}^{T}(b)=b\circ\psi_{\alpha}$.\end{lem}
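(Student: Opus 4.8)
The plan is to realize $C(X)$ as the closure of the increasing union $\bigcup_\alpha \psi_\alpha^T(C(X_\alpha))$ and then lift a positive element through this inductive-limit structure. First I would recall that $\psi_\alpha^T : C(X_\alpha)\to C(X)$ is an injective $*$-homomorphism and that, by the inverse limit structure exactly as in the proof of Proposition 3.4 (see equations (\ref{eq:3.8}) and (\ref{eq:3.9})), the $*$-subalgebra $A_0 := \bigcup_\alpha \psi_\alpha^T(C(X_\alpha))$ is dense in $C(X)$ in the supremum norm. Passing to $n\times n$ matrices, the induced maps make $\bigcup_\alpha \psi_\alpha^T\bigl(M_n(C(X_\alpha))\bigr)$ a dense $*$-subalgebra of $M_n(C(X))$, and I would fix attention on approximating the given $a\in M_n(C(X))_+$ from \emph{inside} this union by a \emph{positive} element coming from a single $X_\alpha$.

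The mechanism is self-adjoint functional calculus applied across a dense subalgebra. Given $a\in M_n(C(X))_+$ and $\epsilon>0$, by density I first choose an index $\alpha$ and some $c\in M_n(C(X_\alpha))$ with
\[
\|a-\psi_\alpha^T(c)\|<\epsilon/2.
\]
Since $a$ is self-adjoint, replacing $c$ by its self-adjoint part $c':=\tfrac12(c+c^*)$ only improves the estimate: because $\psi_\alpha^T$ is a $*$-homomorphism, $\psi_\alpha^T(c')=\tfrac12\bigl(\psi_\alpha^T(c)+\psi_\alpha^T(c)^*\bigr)$ is the self-adjoint part of $\psi_\alpha^T(c)$, and the self-adjoint part of an element is at least as close to the self-adjoint $a$ as the element itself. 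Thus I may assume $c=c^*\in M_n(C(X_\alpha))$ with $\|a-\psi_\alpha^T(c)\|<\epsilon/2$.

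The remaining point is to push $c$ to a genuinely positive element without spoiling the estimate. The idea is to set $b := c_+ = h(c)$, where $h(t)=\max(t,0)$ is the continuous function taking the positive part, so that $b\in M_n(C(X_\alpha))_+$. Here I would invoke that a unital $*$-homomorphism between $C^*$-algebras commutes with continuous functional calculus on self-adjoint elements, so $\psi_\alpha^T(b)=h\bigl(\psi_\alpha^T(c)\bigr)=\bigl(\psi_\alpha^T(c)\bigr)_+$. The main technical observation, which I expect to be the crux, is the standard inequality that the positive-part map is $1$-Lipschitz on self-adjoint elements of a $C^*$-algebra: for self-adjoint $x,y$ one has $\|x_+-y_+\|\le\|x-y\|$. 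Applying this with $x=a$ (already positive, so $a_+=a$) and $y=\psi_\alpha^T(c)$ gives
\[
\|a-\psi_\alpha^T(b)\|=\bigl\|a_+-\bigl(\psi_\alpha^T(c)\bigr)_+\bigr\|\le\|a-\psi_\alpha^T(c)\|<\epsilon/2<\epsilon,
\]
which is the desired conclusion. The hard part is justifying the $1$-Lipschitz bound for the positive part rather than the approximation itself; if one prefers to avoid that inequality one can instead argue spectrally, noting that $\|a-\psi_\alpha^T(c)\|<\epsilon/2$ forces $\mathrm{spec}\bigl(\psi_\alpha^T(c)\bigr)\subset(-\epsilon/2,\infty)$, hence (by injectivity of $\psi_\alpha^T$) $\mathrm{spec}(c)\subset(-\epsilon/2,\infty)$, and then replacing $c$ by $c_+$ moves every point of the spectrum by less than $\epsilon/2$ so that $\|c-c_+\|\le\epsilon/2$, giving $\|a-\psi_\alpha^T(c_+)\|<\epsilon$ by the triangle inequality.
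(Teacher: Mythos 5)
Your main route has a genuine flaw at the step you yourself identify as the crux: the inequality $\|x_{+}-y_{+}\|\leq\|x-y\|$ for self-adjoint elements is \emph{false} in general. The positive-part map (equivalently, the absolute value, since $x_{+}=\tfrac{1}{2}(x+|x|)$) is not operator Lipschitz: by results going back to Kato, the best constant $c_{m}$ in $\||x|-|y|\|\leq c_{m}\|x-y\|$ on self-adjoint $m\times m$ matrices grows like $\log m$, and no universal constant exists in a general $C^{*}$-algebra. So the displayed chain $\|a-\psi_{\alpha}^{T}(b)\|=\|a_{+}-(\psi_{\alpha}^{T}(c))_{+}\|\leq\|a-\psi_{\alpha}^{T}(c)\|$ cannot be justified as stated. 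Your fallback spectral argument also contains an error: $\psi_{\alpha}^{T}$ need not be injective, since the natural maps $\psi_{\alpha}:X\to X_{\alpha}$ of an inverse limit need not be surjective (the paper itself works with $Y_{\alpha}=\psi_{\alpha}(X)$ in the proof of Proposition 3.4 for exactly this reason); for a non-injective unital $*$-homomorphism one only has $\mathrm{spec}(\psi_{\alpha}^{T}(c))\subseteq\mathrm{spec}(c)$, which is the wrong inclusion for your purposes. Fortunately the fallback is trivially repaired by never mentioning $\mathrm{spec}(c)$: work with $y=\psi_{\alpha}^{T}(c)$ in $M_{n}(C(X))$. From $a\geq0$ and $\|a-y\|<\epsilon/2$ one gets $y\geq-(\epsilon/2)1_{n}$, hence $\mathrm{spec}(y)\subset[-\epsilon/2,\infty)$ and $\|y-y_{+}\|\leq\epsilon/2$; since $\psi_{\alpha}^{T}$ is a unital $*$-homomorphism commuting with continuous functional calculus, $y_{+}=\psi_{\alpha}^{T}(c_{+})$, so $b=c_{+}\in M_{n}(C(X_{\alpha}))_{+}$ satisfies $\|a-\psi_{\alpha}^{T}(b)\|<\epsilon$ by the triangle inequality. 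This uses only that one of the two elements is positive, which is precisely why a constant-$2$ estimate survives even though the constant-$1$ Lipschitz claim fails.

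For comparison, the paper sidesteps the positivity-repair problem entirely: it takes the positive square root $c$ with $c^{2}=a$, approximates $c$ by some $d\in M_{n}(C(X_{\alpha}))$ to within $\epsilon/3$, and sets $b=d^{*}d$, which is positive by construction; the estimate $\|a-\psi_{\alpha}^{T}(b)\|=\|c^{2}-\psi_{\alpha}^{T}(d)^{*}\psi_{\alpha}^{T}(d)\|<\epsilon$ then follows from a triangle-inequality computation (after normalizing $\|a\|=1$), using also that $\|c-\psi_{\alpha}^{T}(d^{*})\|=\|c-\psi_{\alpha}^{T}(d)\|$. The square-root trick is the more robust pattern -- it needs no spectral perturbation estimate and no functional-calculus commutation beyond $\psi_{\alpha}^{T}(d^{*}d)=\psi_{\alpha}^{T}(d)^{*}\psi_{\alpha}^{T}(d)$ -- whereas your (corrected) route is a fine alternative that makes visible exactly why an approximant to a positive element can be truncated to a positive one at a cost of a factor of $2$.
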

\begin{proof}
\noindent We may assume $||a||=1$ and $\epsilon<1$. Since $a$ is
positive, by the functional calculus of $a$ there is some $c\in M_{n}(C(X))_{+}$
such that $c^{2}=a$. As in the proof of 3.4, pick some $\alpha$
and $d\in M_{n}(C(X_{\alpha}))$ such that,
\begin{equation}
||c-\psi_{\alpha}^{T}(d)||<\frac{\epsilon}{3}.\label{eq:4.1}
\end{equation}
Since $c$ is positive, $c^{*}=c$ where $c^{*}$ is the conjugate
transpose of $c$. 

\noindent Then,
\begin{eqnarray}
||c-\psi_{\alpha}^{T}(d^{*}))|| & = & ||(c^{*}-\psi_{\alpha}^{T}(d^{*}))||\nonumber \\
 & = & ||(c-\psi_{\alpha}^{T}(d))^{*}||\nonumber \\
 & = & ||c-\psi_{\alpha}^{T}(d)||\nonumber \\
 & < & \frac{\epsilon}{3}\label{eq:4.2}
\end{eqnarray}
 Put $b=d^{*}d$ then $b\in M_{n}(C(X_{\alpha}))_{+}$ and moreover,

\noindent 
\begin{eqnarray*}
||a-\psi_{\alpha}^{T}(b))|| & = & ||c^{2}-\psi_{\alpha}^{T}(d^{*}d)||\\
 & = & ||c^{2}-\psi_{\alpha}^{T}(d^{*})\psi_{\alpha}^{T}(d)||\\
 & \leq & ||c^{2}-c\cdot\psi_{\alpha}^{T}(d)||+||c\cdot\psi_{\alpha}^{T}(d)-\psi_{\alpha}^{T}(d^{*})\psi_{\alpha}^{T}(d)||\\
 & \leq & ||c||\cdot||c-\psi_{\alpha}^{T}(d)||+||c-\psi_{\alpha}^{T}(d^{*})||\cdot||\psi_{\alpha}^{T}(d)||\\
 & < & \epsilon
\end{eqnarray*}

\noindent The last inequality follows from (\ref{eq:4.1}), (\ref{eq:4.2})
and the fact that $||\psi_{\alpha}^{T}(d)||\leq2$.
\end{proof}
\noindent For $\epsilon\geq0,$ let $f_{\epsilon}:[0,1]\to[0,1]$
be defined by, 
\[
f_{\epsilon}(t)=\max\{\epsilon,t\},\forall t\in[0,1].
\]

\noindent For any $a\in M_{n}(C(X))_{+}$, let $(a-\epsilon)_{+}$
denote the element $f_{\epsilon}(a)\in M_{n}(C(X))_{+}$ given by
the functional calculus of $a.$ Recall that by support projection
of $d\in M_{n}(C(X))$ we mean the function (not necessarily continuous)
which maps $x$ to the orthogonal projection of $\mathbb{C}^{n}$
onto $d(x)(\mathbb{C}^{n})$. Note that the support projection of
$f_{\epsilon}(a)$ is the spectral projection $\chi_{(\epsilon,1]}(a).$
To prove 4.5, we need the following proposition.
\begin{prop}
\noindent \cite[Proposition 2.2]{Rod1} Let $X$ be compact Hausdorff
and $a,b\in M_{n}(C(X))_{+}.$ Let $\epsilon>0$ and suppose that
$||b-a||<\epsilon.$ Then there exists $c\in M_{n}(C(X))$ such that,

\noindent 
\[
(a-\epsilon)_{+}=c^{*}bc.
\]

\end{prop}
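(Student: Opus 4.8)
The statement is Rørdam's lemma, cited here as \cite[Proposition 2.2]{Rod1}, so one may simply invoke it; nonetheless, here is how I would prove it. The plan is to convert the norm hypothesis into two-sided operator inequalities, extract an exact functional-calculus identity for the cut-down $(a-\epsilon)_+$, and then realise the desired factorisation exactly by exploiting the spectral gap that the cut produces. Since $M_{n}(C(X))$ is already unital, I would work directly with $1=1_{n}$ and note that $\|a-b\|<\epsilon$ yields $a-\epsilon 1\le b\le a+\epsilon 1$; set $\delta:=\epsilon-\|a-b\|>0$.

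Writing $e=(a-\epsilon)_{+}$ and working inside the commutative algebra $C^{*}(a,1)$, I would first record the exact identity $e^{1/2}(a-\epsilon 1)e^{1/2}=e^{2}$, valid because $a-\epsilon 1$ agrees with $e$ on the spectral set where $e\neq 0$ (both sides are the continuous function $t\mapsto (t-\epsilon)_{+}(t-\epsilon)$ of $a$). Feeding in $a-\epsilon 1\le b$ then gives the operator inequality
\[
e^{2}\;\le\;e^{1/2}\,b\,e^{1/2}\;=\;(b^{1/2}e^{1/2})^{*}(b^{1/2}e^{1/2}),
\]
while the companion bound $b\le a+\epsilon 1$ gives $e^{1/2}be^{1/2}\le e^{2}+2\epsilon e$. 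Thus $e^{1/2}be^{1/2}$ is squeezed between $e^{2}$ and $e^{2}+2\epsilon e$. Equivalently, if $p=\chi_{(\epsilon,\infty)}(a)$ denotes the support projection of $e$, then $pbp\ge \delta p$ (indeed $pbp=pap-p(a-b)p\ge \epsilon p-\|a-b\|p=\delta p$), so $b$ is bounded below on the range of $e$.

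The final step is to upgrade the inequality $e^{2}\le (b^{1/2}e^{1/2})^{*}(b^{1/2}e^{1/2})$ to an exact factorisation $c^{*}bc=e$. This is a Douglas/Cohen-type argument: one builds $c$ as a norm limit of regularised elements such as $b^{1/2}e^{1/2}\bigl(e^{1/2}be^{1/2}+\tfrac{1}{m}\bigr)^{-1}e^{1/2}$ (or directly from the polar-type decomposition of $b^{1/2}e^{1/2}$) and verifies convergence. I expect this to be the main obstacle: in a general $C^{*}$-algebra such factorisations are only approximate, and the passage to \emph{exact} equality is precisely what the cut buys. The point is that deleting the part of the spectrum of $a$ below $\epsilon$ forces $b\ge \delta$ on the support of $e$ (the bound $pbp\ge\delta p$ above), which closes the spectral gap at $0$ and makes the regularised sequence converge in norm to an honest element $c\in M_{n}(C(X))$ with $c^{*}bc=(a-\epsilon)_{+}$; without the cut only Cuntz subequivalence, an approximate statement, would survive.
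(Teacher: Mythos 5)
The paper contains no argument for this statement at all: Proposition 4.4 is imported verbatim from R{\o}rdam with the citation \cite[Proposition 2.2]{Rod1}, so your opening remark that one may simply invoke it is exactly what the paper does, and there is no internal proof to compare against. Judged on its own merits, the first half of your sketch is the correct and standard opening of R{\o}rdam's actual argument: the sandwich $a-\epsilon 1\le b\le a+\epsilon 1$, the exact functional-calculus identity $e^{1/2}(a-\epsilon 1)e^{1/2}=e^{2}$ for $e=(a-\epsilon)_{+}$, the consequence $e^{2}\le z^{*}z$ with $z=b^{1/2}e^{1/2}$, and the fiberwise bound $pbp\ge\delta p$ are all correct. (Two side remarks: you are rightly reading $(a-\epsilon)_{+}$ as $\max\{t-\epsilon,0\}$ applied to $a$ --- the paper's displayed $f_{\epsilon}(t)=\max\{\epsilon,t\}$ is a typo, as its own comment about support projections shows; and $p=\chi_{(\epsilon,\infty)}(a)$ is generally not an element of $M_{n}(C(X))$, though you only use it pointwise, which is harmless. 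The companion bound $e^{1/2}be^{1/2}\le e^{2}+2\epsilon e$ is correct but plays no role.)

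The genuine gap is the final step, on two counts. First, your displayed candidate is off by a half-power of $e$: already for positive scalars with $e>0$ one has $b^{1/2}e^{1/2}\bigl(e^{1/2}be^{1/2}+\tfrac{1}{m}\bigr)^{-1}e^{1/2}\to b^{-1/2}$, so the limit satisfies $c^{*}bc=1$ (in general, the support projection of $e$), not $c^{*}bc=e$; the trailing factor should be $e$, not $e^{1/2}$. Second, and more seriously, the claimed mechanism --- that $pbp\ge\delta p$ forces norm convergence of the regularised sequence --- does not hold: exponent $\tfrac12$ is exactly the borderline at which such resolvent regularisations fail to be norm-Cauchy. From $x^{*}x\le y$ one gets only $\|x\,\phi(y)\|^{2}\le\sup_{u}u\,\phi(u)^{2}$, and for $\phi(u)=(u+\tfrac1m)^{-1/2}-(u+\tfrac{1}{m'})^{-1/2}$ this supremum is of order $1$ (test $u\approx\tfrac1m$ with $m'\gg m$), uniformly in $\delta$; the analogous estimate for your full-resolvent version is equally non-decaying. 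This is precisely why the classical completion is not a naive regularisation: it runs through Pedersen-type factorisation --- $x^{*}x\le y$ implies $x=uy^{\alpha}$ with $u=\lim_{m}x(y+\tfrac1m)^{-\alpha}\in A$, norm-convergent exactly because $\alpha<\tfrac12$ strictly, where $\sup_{u}u\,\phi(u)^{2}\sim m^{2\alpha-1}$ --- combined with polar decompositions $z=v|z|$ in the bidual, the fact that $vf(|z|)\in A$ for continuous $f$ vanishing at $0$, and exact intertwinings such as $x\,h(x^{*}x)=h(xx^{*})\,x$. Moreover, in the standard argument the strict gap $\delta=\epsilon-\|a-b\|$ enters not as a lower bound on $b$ forcing convergence but as cut-budget bookkeeping: one conjugates by a ramp function of $a$ and then spends a second cut $\eta<\delta$ in the exact-factorisation lemma (``$x\le y$ implies $(x-\eta)_{+}=d^{*}yd$'') so as to land exactly on $(a-\epsilon)_{+}$. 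So: the first half of your plan is the right one, but the finish as proposed would fail as stated, and it is exactly where the real content of R{\o}rdam's proof lies.
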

\noindent We now prove Lemma 4.5.
\begin{lem}
\noindent Suppose for each finite simplicial complex $Z$ with $dim\, Z\leq d$,
the function space $C(Z,S(n,k,l))$ is path connected. Then, for every
compact Hausdorff space $X$ of covering dimension $d,$ space $C(X,S(n,k,l))$
is path connected.\end{lem}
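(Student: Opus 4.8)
The plan is to reduce the general compact Hausdorff case to the finite simplicial complex case by exploiting the inverse limit structure, mirroring the strategy of Proposition 3.4. The key idea is that a compact Hausdorff space $X$ of covering dimension $d$ can be written as an inverse limit of compact metric spaces, each of which is in turn an inverse limit of finite simplicial complexes of dimension at most $d$. So I would first reduce to showing the claim for $X$ a compact metric space, and then further reduce to the finite simplicial complex case. Given two maps $a, b \in C(X, S(n,k,l))$, my goal is to produce a path between them inside $C(X, S(n,k,l))$.

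First I would take $a \in C(X, S(n,k,l)) \subset M_n(C(X))_+$. Write $X = \underleftarrow{\lim}\, X_\alpha$ for an inverse system of finite simplicial complexes with $\dim X_\alpha \leq d$, and let $\psi_\alpha : X \to X_\alpha$ be the canonical maps. Since $a$ has rank at least $l$ everywhere, Lemma 4.2 supplies some $\eta > 0$ such that the spectral projection $\chi_{(\eta,\infty)}(a(x))$ has rank at least $l$ for all $x \in X$; note also that $\mathrm{rank}\,a(x) \leq k$ gives the upper bound on these spectral projections. Next, using Lemma 4.3 I would approximate $a$ in norm to within a small $\epsilon < \eta$ by an element $\psi_\alpha^T(b)$ with $b \in M_n(C(X_\alpha))_+$ for a suitable index $\alpha$. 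The point of Proposition 4.4 is then to transfer the rank information: since $\|a - \psi_\alpha^T(b)\| < \epsilon$, we obtain $c$ with $(a - \epsilon)_+ = c^* \psi_\alpha^T(b) c$, and because the support projection of $(a-\epsilon)_+$ is $\chi_{(\epsilon,1]}(a)$ (which for $\epsilon < \eta$ has rank between $l$ and $k$ pointwise), this forces $b$—after restricting to the image $Y_\alpha = \psi_\alpha(X)$ and cutting down by the appropriate spectral projection—to take values in $S(n,k,l)$, at least on $Y_\alpha$. I would thus produce $\tilde b_\alpha \in C(X_\alpha, S(n,k,l))$ whose pullback $\psi_\alpha^T(\tilde b_\alpha)$ lies in the same path component of $C(X, S(n,k,l))$ as $a$, via a short linear homotopy that preserves the rank constraints.

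Carrying this out for both $a$ and $b$, and passing to a common index $\alpha$ in the directed system, I reduce to the following: two maps $a', b' \in C(X, S(n,k,l))$ each factor (up to a path in $C(X,S(n,k,l))$) through the same finite simplicial complex $X_\alpha$ as $\psi_\alpha^T$ applied to maps $\tilde a_\alpha, \tilde b_\alpha \in C(X_\alpha, S(n,k,l))$. By hypothesis $C(X_\alpha, S(n,k,l))$ is path connected, so there is a path $t \mapsto g_\alpha(t)$ in $C(X_\alpha, S(n,k,l))$ joining $\tilde a_\alpha$ to $\tilde b_\alpha$. Pulling this back by $\psi_\alpha^T$ gives a path $t \mapsto g_\alpha(t) \circ \psi_\alpha$ in $C(X, S(n,k,l))$, and concatenating with the two reduction paths yields a path from $a'$ to $b'$, and hence from $a$ to $b$.

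\textbf{The main obstacle} I anticipate is the rank-transfer step: extracting from the norm approximation $\|a - \psi_\alpha^T(b)\| < \epsilon$ an honest element of $C(X_\alpha, S(n,k,l))$ whose pullback is genuinely homotopic to $a$ inside the rank-constrained space. The subtlety is that $b$ itself need not satisfy the rank bounds $l \leq \mathrm{rank} \leq k$, and its rank data live over $X_\alpha$ rather than over $X$; one must use Lemma 4.2 together with Proposition 4.4 to control the spectral projection $\chi_{(\epsilon, \infty)}(\psi_\alpha^T(b))$ pointwise, then descend this control from $Y_\alpha = \psi_\alpha(X)$ to a genuine map on $X_\alpha$ (or a subcomplex containing $Y_\alpha$, via Proposition 4.1), all while keeping the rank between $l$ and $k$. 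Verifying that the natural linear homotopy from $a$ to the cut-down pullback stays inside $S(n,k,l)$ pointwise is where the estimates must be managed carefully, exactly as the analogous norm estimates were handled in the proof of Proposition 3.4.
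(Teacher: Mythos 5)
Your plan coincides with the paper's proof in every structural respect: the same two-stage inverse-limit reduction (compact Hausdorff $\to$ compact metric $\to$ finite simplicial complexes), Lemma 4.2 to fix the spectral gap $\eta$, Lemma 4.3 for the norm approximation $\|a-\psi_{\alpha}^{T}(b)\|<\eta$, Proposition 4.4 to transfer the rank bounds to the cutdown $(a_{\alpha}-\eta)_{+}$ (the paper applies it in both directions, with cutoffs $\eta$ and $2\eta$, to get $\mathrm{rank}\leq k$ and $\mathrm{rank}\geq l$ respectively), passage to a common index, and pullback of the path supplied by the hypothesis.

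However, the step you flag as the ``main obstacle'' is left genuinely open in your write-up, and it is exactly where the paper imports an external tool you do not mention. After cutting down, the constraints $l\leq\mathrm{rank}\leq k$ are known only on the closed image $Y_{\alpha}=\psi_{\alpha}(X)$, which in general is \emph{not} a subcomplex, so the hypothesis cannot be applied to it directly; and Proposition 4.1 does not extend anything --- it only produces a subcomplex $Z$ inside an open set $U$ that you must already possess, together with a map on $U$ already satisfying the constraints. Nor is ``keeping the rank between $l$ and $k$'' near $Y_{\alpha}$ automatic: by Proposition 2.8 rank functions are merely lower semicontinuous, so the bound $\mathrm{rank}\geq l$ propagates from $Y_{\alpha}$ to a neighborhood, but the upper bound $\mathrm{rank}\leq k$ does not, and the cutdown $(b-\eta)_{+}$ can have rank exceeding $k$ arbitrarily close to $Y_{\alpha}$. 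The paper closes this gap by invoking Lemma 2.7 of \cite{tm1}: since the restrictions of $c$ and $d$ to the closed set $Y_{\alpha}$ lie in $C(Y_{\alpha},S(n,k,l))$, they extend to $\tilde{c},\tilde{d}\in C(U,S(n,k,l))$ for some open $U\supset Y_{\alpha}$, and only then does Proposition 4.1 furnish $Z$ with $Y_{\alpha}\subset Z\subset U$ to which the path-connectedness hypothesis applies. (One could instead argue by hand --- raise the spectral cutoff slightly and use continuity of eigenvalues to retain $\mathrm{rank}\leq k$ on a compact neighborhood, while lower semicontinuity preserves $\mathrm{rank}\geq l$ --- but some such extension argument must actually be supplied.) A smaller repair: the ``natural linear homotopy'' from $a$ to the cut-down pullback need not respect the rank window; the paper instead uses the cut-down path $t\mapsto[((1-t)a+ta_{\alpha})-\eta]_{+}$, which stays in $C(X,S(n,k,l))$ by the same Proposition 4.4 estimate because $\|a-((1-t)a+ta_{\alpha})\|<\eta$ for all $t$, concatenated with the linear path from $a$ to $(a-\eta)_{+}$.
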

\begin{proof}
\noindent Like in the proof of 3.4, we first prove the result for
the case of $X$ being a compact metric space. In this case $X=\underleftarrow{\lim}\, X_{\alpha}$,
where $(X_{\alpha},\psi_{\alpha\beta})$ is a inverse system finite
simplicial complexes with $dim\, X_{\alpha}\leq d$. Let $\psi_{\alpha}:X\to X_{\alpha}$
be the natural maps. 

\noindent Fixed $a\in C(X,S(n,k,l))$, our first goal is to show that
there is some index $\alpha$ and $c\in C(X_{\alpha},(M_{n})_{+})$
such that $\psi_{\alpha}^{T}(c)\in C(X,S(n,k,l))$ and there is a
path in $C(X,S(n,k,l))$ connecting $a$ to $\psi_{\alpha}^{T}(c)$. 

\noindent To do this, first note that we may assume $||a||=1$ and
use Lemma 4.2 to pick $\eta>0$ such that, 
\begin{equation}
\text{rank }[\chi_{(2\eta,1]}(a(x))]\geq l,\forall x\in X.\label{eq:4.3}
\end{equation}

\noindent By Lemma 4.3, pick $\alpha$ and $b\in M_{n}(C(X_{\alpha}))_{+}$
such that,
\begin{equation}
||a-\psi_{\alpha}^{T}(b)||<\eta.\label{eq:4.4}
\end{equation}

\noindent Let us write $a_{\alpha}=\psi_{\alpha}^{T}(b)$. Note that
from Proposition 4.4 and (\ref{eq:4.4}),
\[
(a_{\alpha}-\eta)_{+}=d^{*}ad,
\]

\noindent for some $d\in M_{n}(C(X)).$

\noindent Therefore, for every $x\in X,$
\begin{eqnarray}
\text{rank }[(a_{\alpha}-\eta)_{+}(x)] & \le & \text{rank }(a(x))\nonumber \\
 & \leq & k\label{eq:4.5}
\end{eqnarray}

\noindent From the functional calculus for $a_{\alpha}$ and \ref{eq:4.4},
\begin{eqnarray*}
||(a_{\alpha}-\eta)_{+}-a|| & \leq & ||(a_{\alpha}-\eta)_{+}-a_{\alpha}||+||a_{\alpha}-a||\\
 & < & \eta+\eta\text{ }\\
 & = & 2\eta.
\end{eqnarray*}

\noindent Therefore, by Proposition 4.4 it follows that,
\[
\text{rank }[(a-2\eta)_{+}(x)]\leq\text{rank }[(a_{\alpha}-\eta)_{+}(x)],\forall x\in X.
\]

\noindent Now, from (\ref{eq:4.3}) and the discussion preceding Proposition
4.4,
\begin{equation}
\text{rank }[(a_{\alpha}-\eta)_{+}(x)]\geq l,\forall x\in X.\label{eq:4.6}
\end{equation}

\noindent Put $c=\psi_{\alpha}^{T}((b-\eta)_{+})$. 

\noindent Then, 

\noindent 
\begin{eqnarray*}
c & = & (b-\eta)_{+}\circ\psi_{\alpha}\\
 & = & ((b\circ\psi_{\alpha})-\eta)_{+}\\
 & = & (a_{\alpha}-\eta)_{+}.
\end{eqnarray*}

\noindent Thus by (\ref{eq:4.5}) and (\ref{eq:4.6}), $c\in C(X,S(n,k,l))$.

\noindent To complete our first goal consider $h:[0,1]\to C(X,S(n,k,l))$
given by,
\[
h(t)=[((1-t)a+ta_{\alpha})-\eta]_{+}.
\]
 We have $h(0)=(a-\eta)_{+}$ and $h(1)=c$.

\noindent Note that, 
\[
||a-((1-t)a+ta_{\alpha})||=t||a-a_{\alpha}||<\eta,\forall t\in[0,1].
\]

\noindent Thus, by following the same type of argument we used to
show $c\in C(X,S(n,k,l)),$ it follows that 
\[
h(t)=[((1-t)a+ta_{\alpha})-\eta]_{+}\in C(X,S(n,k,l)),\forall t\in[0,1].
\]

\noindent The continuity of $h$ can be established using functional
calculus arguments, see \cite[Lemma 1.25]{LLR}. 

\noindent First goal is now achieved once when we notice that $a$
is homotopic to $(a-\eta)_{+}$ as maps in $C(X,S(n,k,l)).$ But this
is immediate. Indeed, one can use the linear path $t\mapsto(1-t)a+t(a-\eta)_{+}.$

\noindent Now suppose $a,b\in C(X,S(n,k,l)).$ We need to construct
a path in $C(X,S(n,k,l))$ joining $a$ to $b$. From the first part
w.l.o.g. we may assume that $a=\psi_{\alpha}^{T}(c)$, $b=\psi_{\beta}^{T}(d)$,
for some $\alpha=\beta$ and $c,d\in M_{n}(C(X_{\alpha}))_{+}$. 

\noindent Put $Y=\psi_{\alpha}(X)\subset X_{\alpha}$. Then as $X$
is compact so is $Y$. Moreover, $Y$ is closed as each $X_{\alpha}$
is Hausdorff. 

\noindent For each $y=\psi_{\alpha}(x),$
\begin{eqnarray*}
\text{rank }(c(y)) & = & \text{rank }(c(\psi_{\alpha}(x)))\\
 & = & \text{rank }(a(x))
\end{eqnarray*}

\noindent Hence, 
\[
l\leq\text{rank }(c(y))\leq k,\forall y\in Y.
\]

\noindent Similarly,
\[
l\leq\text{rank }(d(y))\leq k,\forall y\in Y.
\]

\noindent Therefore as $Y$ is closed, by Lemma 2.7 of \cite{tm1}
there is some open neighborhood $U$ of $Y$ in $X_{\alpha}$ and
$\tilde{c},\tilde{d}\in C(U,S(n,k,l))$ such that $\tilde{c},\tilde{d}$
are extensions of $c,d$ respectively. 

\noindent By Proposition 4.1, after a refinement of the simplicial
structure of $X_{\alpha}$, we have a finite sub complex $Z$ of $X_{\alpha}$
such that, $Y\subset Z\subset U$. Hence, we may view $\tilde{c},\tilde{d}\in C(Z,S(n,k,l))$.
Now, as $Z$ is a finite simplicial complex with $dim\, Z\leq d$,
by the hypothesis there is a path $\tilde{g}:[0,1]\to C(Z,S(n,k,l))$
such that $\tilde{g}(0)=\tilde{c}$ and $\tilde{g}(1)=\tilde{d}$. 

\noindent Define a path $g:[0,1]\to C(X,M_{n})$ by,

\noindent 
\[
g(t)(x)=\tilde{g}(t)(\psi_{\alpha}(x)).
\]

\noindent By definition of $\tilde{g}$ it is clear that $g(0)=\psi_{\alpha}^{T}(c)=a$
, $g(1)=\psi_{\alpha}^{T}(d)=b$ and moreover, $g(t)\in C(X,S(n,k,l))$. 

\noindent This proves the result in the case of $X$ being a compact
metric space such that $dim\, X\leq d$. To get the result for $X$
being compact Hausdorff, write $X=\underleftarrow{\lim}\, X_{\alpha}$
where now $X_{\alpha}$ is compact metric with $dim\, X_{\alpha}\leq d$.
Since $C(X_{\alpha},S(n,k,l))$ is path connected for each $\alpha$
by the first part of the proof, following a similar argument as before
give the result. (Note here that the argument is simpler than in the
first step. Since $\psi_{\alpha}(X)\subset X_{\alpha}$ is compact
metric for any $\alpha$, we do not require Lemma 2.7 of \cite{tm1}.\end{proof}
\begin{thm}
\noindent Let $X$ be compact Hausdorff with $dim\, X\leq d.$ If
$\pi_{r}(S(n,k,l))=0$ for each $r\leq d$ then, $C(X,S(n,k,l))$
is path connected.\end{thm}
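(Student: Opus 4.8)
The plan is to reduce the general statement to the combinatorial heart contained in Lemma 4.5, and then to supply the path-connectedness hypothesis of that Lemma by a standard obstruction-theoretic / cellular induction argument on finite simplicial complexes. By Lemma 4.5, it suffices to prove the following: if $\pi_r(S(n,k,l))=0$ for all $r\leq d$, then $C(Z,S(n,k,l))$ is path connected for every finite simplicial complex $Z$ with $\dim Z\leq d$. Once this is established, Lemma 4.5 immediately upgrades it to all compact Hausdorff $X$ with $\dim X\leq d$, which is exactly the conclusion of Theorem 4.6. So the entire burden falls on the simplicial-complex case, where the powerful machinery of classical homotopy theory (as in \cite{Wh}) is available.

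For the simplicial case, the strategy I would use is a skeleton-by-skeleton deformation. Given $f,g\in C(Z,S(n,k,l))$, I want a path in $C(Z,S(n,k,l))$ joining them; equivalently, since $S(n,k,l)$ need not be an $H$-space, I would show every $f$ is homotopic to a constant map. I would induct on the skeleta $Z^{(0)}\subset Z^{(1)}\subset\cdots\subset Z^{(d)}=Z$. On $Z^{(0)}$ (a finite set of points), path-connectedness of $S(n,k,l)$ itself — which is the $r=0$ case of the hypothesis $\pi_0(S(n,k,l))=0$ — lets me homotope $f\restriction_{Z^{(0)}}$ to a constant. Assuming inductively that $f$ has been homotoped to a constant on $Z^{(r-1)}$, I would extend this homotopy over $Z^{(r)}$: each $r$-cell is attached along its boundary $\partial\Delta^r\cong S^{r-1}$, and the obstruction to extending the homotopy over the cell lives in $\pi_r(S(n,k,l))$, which vanishes by hypothesis since $r\leq d$. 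Concretely, on each closed $r$-simplex the already-constructed homotopy on the boundary together with the map on the simplex defines a map out of $(\Delta^r\times\{0\})\cup(\partial\Delta^r\times[0,1])$; the vanishing of $\pi_r$ together with the homotopy extension property for the CW-pair $(Z,Z^{(r-1)})$ allows me to fill in. Carrying this induction up to $r=d$ deforms $f$ to a constant map, giving path-connectedness.

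The main obstacle is the homotopy-theoretic bookkeeping at the inductive step, specifically articulating the obstruction precisely and invoking the correct classical theorem. The cleanest route is to note that $Z$ is a CW-complex of dimension $\leq d$ and that a space $W$ with $\pi_r(W)=0$ for all $r\leq d$ is \emph{$d$-connected}; a standard result in obstruction theory (see \cite{Wh}) then asserts that any two maps from a $d$-dimensional CW-complex into a $d$-connected space are homotopic, because all the obstructions to constructing a homotopy lie in cohomology groups $H^{r}(Z;\pi_r(W))$ with $r\leq d$, and these coefficient groups vanish. I would prefer to cite this packaged form rather than rebuild the cellular induction by hand, since it is exactly the kind of classical statement the paper signals it will use. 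The delicate point to get right is simply that the relevant obstruction groups run only up to the dimension $d$, so that the hypothesis $\pi_r(S(n,k,l))=0$ for $r\leq d$ is precisely what kills every obstruction.

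The remaining step is to confirm that $S(n,k,l)$ is a reasonable enough target for these classical results to apply — it is a (locally compact, indeed a semialgebraic) metrizable space that is homotopy equivalent to a CW-complex, so there is no pathology obstructing the use of obstruction theory or the homotopy extension property. With the simplicial case in hand, I would conclude by a single appeal to Lemma 4.5 to pass from finite simplicial complexes of dimension $\leq d$ to arbitrary compact Hausdorff $X$ of covering dimension $\leq d$, completing the proof of Theorem 4.6.
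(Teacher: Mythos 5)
Your proposal is correct, and it shares the paper's overall architecture — both proofs reduce via Lemma 4.5 to showing $C(Z,S(n,k,l))$ is path connected for finite simplicial complexes $Z$ of dimension at most $d$ — but you handle that finite-complex step by a genuinely different classical route. The paper inducts on the \emph{number of simplices}: writing $K=L\cup\{s\}$ with $s$ a top $n$-simplex, it uses that $\{K,L\}$ is an NDR pair and that $S(n,k,l)$ is compactly generated to invoke the coexact Puppe-type sequence $[C(L,S(n,k,l))]\leftarrow[C(K,S(n,k,l))]\leftarrow[C(K/L,S(n,k,l))]$ of \cite[Diagram 6.3]{Wh}; since $K/L\cong S^{n}$ with $n\leq d$, the hypothesis $\pi_{n}(S(n,k,l))=0$ makes the right-hand set a singleton, and exactness plus the inductive hypothesis collapses $[C(K,S(n,k,l))]$ to a point. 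You instead induct over \emph{skeleta}, killing $f$ cell by cell using the homotopy extension property, or equivalently you cite the packaged obstruction-theoretic statement that every map from a CW-complex of dimension $\leq d$ into a $d$-connected space is null-homotopic, with obstructions in $H^{r}(Z;\pi_{r}(S(n,k,l)))$ for $r\leq d$. Both arguments consume the hypothesis in exactly the same way (only through spheres of dimension $\leq d$), and both are sound; two small points you should make explicit are that the untwisted-coefficient form of obstruction theory needs the target simple or simply connected — which is automatic for $d\geq1$ since $\pi_{1}(S(n,k,l))=0$ by hypothesis, the case $d=0$ being trivial — and that once every map is null-homotopic you still use $\pi_{0}(S(n,k,l))=0$ to connect constant maps at different basepoints. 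What each approach buys: yours invokes a textbook theorem and needs no quotient-space bookkeeping, while the paper's Puppe-sequence induction is lower-tech, avoids obstruction cohomology and any simplicity hypothesis altogether, and needs of the target only that it be compactly generated, which is why the paper flags local compactness of $S(n,k,l)$ rather than, as you do, its CW homotopy type (which in fact is not needed for either argument, since the source is the CW-complex).
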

\begin{proof}
\noindent From Lemma 4.5, it suffices to show that $C(K,S(n,k,l))$
is path connected for every finite simplicial complex $K$ with $dim\, K\leq d.$
We will use induction on the number of simplexes in the complex $K$
to show this. 

\noindent If $K$ consists of a single simplex then result is true
since $K$ is contractible. 

\noindent Suppose now that result is true for every simplicial complex
which contains $r$ number of simplexes.

\noindent To complete the inductive step, let $K=L\cup\{s\}$ where
$L$ is a sub complex of $K$ containing $r$ number of simplexes
and $s$ is a $n$-simplex for some $n\leq d.$ 

\noindent Observe that $\{K,L\}$ is a \textsl{NDR} pair in the sense
of \cite{Wh}. Since $S(n,k,l)$ is locally compact, $S(n,k,l)$ is
compactly generated. By \cite[Diagram 6.3]{Wh}, following sequence
is exact in the category of sets with base points,
\begin{equation}
[C(L,S(n,k,l))]\overset{i_{*}}{\,\longleftarrow\,[}C(K,S(n,k,l))]\overset{}{\overset{p_{*}}{\,\longleftarrow\,[}C((K/L),S(n,k,l))],}\label{eq:4.7}
\end{equation}

\noindent where $K/L$ stands for the quotient space. For a space $Z$, by $[C(Z,S(n,k,l))]$
we mean the set of all homotopy classes of maps in $C(Z,S(n,k,l))$
and as the base point we may choose any constant map $z\mapsto a$,
for a fixed $a\in S(n,k,l).$ The maps $i_{*}$ and $p_{*}$ are the
maps induced by the inclusion $i:L\to K$ and the quotient map $p:K\to K/L$. 

\noindent By the induction hypothesis $[C(L,S(n,k,l))]$ consists
of a single point. Since $(K/L)\cong S^{n}$ and $n\leq d$, by assumption
$[C((K/L),S(n,k,l))]$ is also a singleton. Thus, by exactness of
(\ref{eq:4.7}), $[C(K,S(n,k,l))]$ contains only one point, i.e.
$C(K,S(n,k,l))$ is path connected.\end{proof}

\end{document}